\def\ll{L^2_xL^2_M}
\def\lbarl{\overline{L}^2_xL^2_M}
\newcommand\hl[1]{H^{#1}_xL^2_M}
\newcommand\hbarl[1]{\overline{H}^{#1}_xL^2_M}
\newcommand\hh[1]{H^{#1}_xH^1_M}
\newcommand\hdoth[1]{H^{#1}_x\dot{H}^1_M}
\newcommand\hbardoth[1]{\overline{H}^{#1}_x\dot{H}^1_M}
\def\ldoth{L^2_x\dot{H}^1_M}
\newcommand\normll[1]{\|#1\|_{\ll}}
\newcommand\normlbarl[1]{\|#1\|_{\lbarl}}
\newcommand\normldoth[1]{\|#1\|_{\ldoth}}
\newcommand\normhl[2]{\|#1\|_{\hl{#2}}}
\newcommand\normhbarl[2]{\|#1\|_{\hbarl{#2}}}
\newcommand\normhdoth[2]{\|#1\|_{\hdoth{#2}}}
\newcommand\normhbardoth[2]{\|#1\|_{\hbardoth{#2}}}
\def\f{f}
\def\ft{\widetilde{f}}
\def\ep{\varepsilon}
\def\al{\alpha}
\DeclareMathOperator\dive{div}
\numberwithin{equation}{section}
\newcommand{\scal}[2]{\langle {#1},{#2}\rangle}
\def\bq{\overline{q}}
\def\bM{Z}
\def\zbar{\overline{Z}}
\newcommand{\ud}{\mathrm{d}}
\newcommand\ddt{\frac{\ud}{\ud t}}
\def\R{\mathbb{R}}
\def\C{\mathbb{C}}
\def\S{\mathbf{S}}
\def\B{\mathcal{B}}
\def\A{\mathbf{A}}
\def\str{\mathbf{J}}
\def\cgrad{\cdot\nabla }
\newcommand{\PP}{\mathbb{P}}
\def\dq{\,\ud q}
\def\dx{\,\ud x}
\def\ds{\,\ud s}
\def\psib{\overline{\psi}}
\def\ub{\overline{u}}
\newtheorem{theorem}{Theorem}
\newtheorem{proposition}[theorem]{Proposition}
\newtheorem{lemma}[theorem]{Lemma}
\newtheorem{remark}[theorem]{Remark}
\date{}
\title{The FENE dumbbell polymer model: existence and uniqueness of solutions for the momentum balance equation.}
\author{A.V. Busuioc, I.S. Ciuperca, D. Iftimie and L.I. Palade}
\def\adrese{
\begin{description}
\item[Adriana Valentina Busuioc:] Université Jean Monnet -- Faculté des Sciences, LaMuse, 23 Rue du Docteur Paul Michelon, 42023 Saint-Etienne, France.\\
Email: \texttt{valentina.busuioc@univ-st-etienne.fr}
\item[Ionel Sorin Ciuperca:] Université de Lyon, CNRS, Université Lyon 1, Institut Camille Jordan, 43 bd. du 11 novembre, Villeurbanne Cedex F-69622, France.\\
Email: \texttt{ciuperca@math.univ-lyon1.fr}
\item[Dragoş Iftimie:] Université de Lyon, CNRS, Université Lyon 1, Institut Camille Jordan, 43 bd. du 11 novembre, Villeurbanne Cedex F-69622, France.\\
Email: \texttt{iftimie@math.univ-lyon1.fr}\\
Web page: \texttt{http://math.univ-lyon1.fr/\~{}iftimie}
\item[Liviu Iulian Palade:] Université de Lyon, CNRS, INSA-Lyon,  Institut Camille Jordan \& Pôle de Mathématiques, Bât. Leonard de Vinci No. 401, 21 avenue Jean Capelle, F-69621, Villeurbanne, France.\\
Email: \texttt{Liviu-Iulian.Palade@insa-lyon.fr}
\end{description}
}
\begin{document}
\maketitle
\begin{abstract}
We consider the FENE dumbbell polymer model which is the coupling of the incompressible Navier-Stokes equations with the corresponding Fokker-Planck-Smoluchowski diffusion equation. We show global well-posedness in the case of a 2D bounded domain. We assume in the general case that the initial velocity is sufficiently small and the initial probability density is sufficiently close to the equilibrium solution; moreover an additional condition on the coefficients is imposed. In the corotational case, we only assume that  the initial probability density is sufficiently close to the equilibrium solution.
\\ \\
\textit{Keywords}: Navier-Stokes equations; FENE dumbbell chains; Fokker-Planck-Smoluchowski  diffusion equation; existence and uniqueness of solutions.
\\ \\
\textit{AMS subject classification}: Primary 76D05; Secondary 35B40
\end{abstract}

\section{Introduction}%\label{intr}

The success of Kirkwood, and of Bird, Curtiss, Armstrong and
Hassager (and their collaborators) kinetic theory of macromolecular
dynamics triggered a still on-going flurry of activity aimed to
providing molecular explanations for non-Newtonian and viscoelastic
flow patterns. This can be reckoned from \cite{BAH87} and \cite{Ott06}, for example.  The cornerstone is the so called diffusion equation, a parabolic-type Fokker-Planck-Smoluchowski partial differential equation, the solution of which is the configurational probability distribution function; the later is the key ingredient for calculating the stress tensor.

The simplest polymer chain model of relevance to Bird {\it et al.} 
theory is that of a dumbbell, where the beads are interconnected either rigidly
or elastically. Although a crude representation of the complicated
 dynamics responsible for the flow viscoelasticity, the
now popular Bird and Warner's 
Finitely Extensible Nonlinear Elastic (FENE for the short; see \cite{War72}) chain model
is capable in capturing many salient experimentally observable flow
patterns of dilute polymer solutions. It was therefore quite natural
that many researchers took on exploring the fundamentals of this
relatively simple model (for more on this and related issues see for example \cite{BE94} and \cite{Sch06}).

The aim of this work is to take on studying the momentum-balance 
(or Navier-Stokes) equations together with the constitutive law for 
the FENE fluid. The latest is obtained by using the so-called
``diffusion equation'', 
practically a Fokker-Planck PDE, the solution of which is the 
configurational probability density. Put it differently, we focus 
on a system of equations that consists of a 
``macroscopical'' motion PDE and a ``microscopical'' Fokker-Plank-Smoluchowski (probability diffusion) PDE. More precisely, given a smooth bounded connected open set $\Omega\subset \R^d$ and some ball $D(0,R)$ we will study the initial boundary value problem  which consists in finding  $u=u(t,x):\R_+\times\Omega\rightarrow\mathbb{R}^d$,  $g=g(t,x,\bq):\R_+\times\Omega\times D(0,R)\rightarrow\mathbb{R}$  and $p=p(t,x):\R_+\times\Omega\rightarrow\mathbb{R}$ solutions of the two following coupled equations:
\begin{equation}\label{eq1}
 \partial_t u+u\cdot\nabla
u-\dfrac{\gamma} {\text{Re}}\triangle u+\nabla
p=\dfrac{\gamma(1-\gamma)}{\text{Re} \, \text{We}^2}\nabla_x \cdot \left(
  \int_{D(0,R)}\dfrac{\bq \otimes\bq }{1-\frac{|\bq |^2}{R^2}}g(t,x,\bq )
  \ud\bq  \right) \quad \text{on } \R_+\times\Omega
\end{equation}
and 
\begin{equation}\label{eq2}
 \partial_t g+
u\cdot\nabla_{x}g+\nabla_{\bq }\cdot \left(
  \sigma(u)\bq g \right)
=
\frac{1}{2 \text{We} \, N}\triangle_{\bq }g+\dfrac{1}{2 
\text{We}} \nabla_{\bq }\left( \dfrac{\bq }{1-\frac{|\bq |^2}{R^2}}g \right)
\quad \text{on } \R_+\times\Omega\times D(0,R). 
\end{equation}
Moreover, the vector field $u$ must be divergence free and $g$ must be a probability density in the $\bq$ variable:
\begin{equation}\label{eq3}
\dive_xu=0,\quad \int_{D(0,R)} g \,\ud\bq \equiv1, \quad g\geq 0. 
\end{equation}
The boundary conditions are
\begin{equation}\label{eq4}
u\bigl|_{\partial\Omega}=0
\end{equation}
plus some boundary conditions for $g$ on $\Omega\times\partial D(0,R)$ which will be embedded in the function spaces we will work with.

The constant $\gamma$ belongs to $(0,1)$, Re and We  are (respectively) the
Reynolds and
Weissenberg numbers and  $N$, $R$ are some polymer related physical
constants used to obtain dimensionless quantities.  We assume all these constants to be strictly positive and moreover that $NR^2>2$. The quantity $\sigma(u)$ is a
short-hand notation for either $\nabla u$ or $\nabla u-\left( \nabla
  u\right) ^t$. In fact, the physical significance is achieved when $\sigma(u)=\nabla u$; we will call this the general case. The choice $\sigma(u)=\nabla u-\left( \nabla u\right) ^t$ is very close to being physical significant while having better mathematical properties; we will call this the corotational case.
Let
\begin{equation*}
 \bM(\bq) = \left(1-\frac{|\bq |^2}{R^2} \right) ^{NR^2/2}\qquad\text{and}\qquad \zbar=\frac{\bM}{\int_{D(0,R)}\bM}\cdot 
\end{equation*}
It is not hard to observe that the couple $(0,\zbar)$ is a steady solution of \eqref{eq1}--\eqref{eq4}.  

The initial boundary value problem \eqref{eq1}--\eqref{eq4} was studied by several authors but mostly in the case where $\Omega=\R^2$ or $\R^3$. The results are different, depending on the model (general or corotational). We start by describing the results where  $\Omega=\R^2$ or $\R^3$. We restrict ourselves to the model described above, but we would like to mention that there are other results on closely related problems (for example a model when the variable $\bq$ lies in the full plane or full space, the Hookean model, etc.). We refer to \cite{Mas-arxiv-2010} for a discussion of all these models.

Global existence and uniqueness of strong solutions of problem \eqref{eq1}--\eqref{eq4} is known in the following situations:
\begin{itemize}
\item $\Omega=\R^2$ and corotational model if $u_0\in H^s(\R^2)$ and $g_0\in H^s(\R^2;H^1_0(D(0,R)))$, $s>2$ (see \cite{LZZ08}). The regularity of $g_0$ in the $\bq$ variable was improved in \cite{Mas08a} to some $L^p$ weighted space for large $p$.
\item  $\Omega=\R^2$ and general model or   $\Omega=\R^3$ and general or corotational model if $u_0$ is small in  $H^s(\R^2)$ and if $\bigl\|Z^{-\frac12}\|g_0-\zbar\|_{H^s(\R^2)}\bigr\|_{L^2(D(0,R))}$ is small, where $s>1+\frac d2$ where $d\in\{2,3\}$ is the space dimension (see \cite{LZ08,Mas08a}). 
\end{itemize}

%Local existence of strong solutions holds true in $\Omega=\R^d$, $d\in\{2,3\}$, for the general model, if $u_0\in H^s(\R^d)$ and $Z^{-\frac12}\|g_0\|_{H^s(\R^d)}\in L^2(D(0,R))$, $s>2$ (see \cite{ZZ08}). Moreover, some results with improved regularity in the $L^p$ setting was done in \cite{KP10}. 

Global existence (no uniqueness yet) of some weak solutions for rough and arbitrarily large initial data was proved in both dimension 2 and 3, first in the corotational case by \cite{LM07} and quite recently in the general case by \cite{Mas-arxiv-2010}, see also \cite{BS-arxiv-2010} for a slightly different version of the system of equations.

Long time asymptotics of the general model were studied in  \cite{JBLO06} where a priori estimates are obtained to prove  formally the stability of the equilibrium solution.  In \cite{BSS05} the authors studied a related model where a smoothing operator is acting on the velocity field and the corresponding stress tensor.

As far as strong solutions on domains with boundaries are concerned, we are aware of two works. One is \cite{ZZ06} where local existence and uniqueness is proved if $u_0\in H^4(\Omega)$ and if $g_0$ is  $H^4$ in $x$ and has some weighted $H^3$ regularity in the variable $\bq$. Another one is \cite{KP10} where local existence is shown if  $u_0\in W^{1,p}(\Omega)$ and $g_0$ is $W^{1,p}$ in $x$ and has some weighted $L^p$ regularity in the $\bq$ variable and $p>d$.
%Moreover, the authors have to assume an additional unphysical boundary condition: $\partial_t u\bigl|_{\partial\Omega}=0$.

The goal of this paper is to address the issue of existence and uniqueness of strong solutions for the above mentioned initial boundary value problem on bounded domains $\Omega \subset \mathbb{R}^2$ with homogeneous Dirichlet boundary conditions. This is not a straightforward adaptation of the known results in the full plane. Indeed, the proof of global existence results of solutions proved by \cite{LZZ08} uses heavily the Littlewood-Paley decomposition and paradifferential calculus; this is of course not available on bounded domains. Even the global existence results for small data involve technical difficulties that make necessary to assume an additional condition of the material coefficients, more precisely we will need to assume \eqref{condcoefftheo}. We refer to Section \ref{finalremarks} for a detailed explanation why this is necessary.

% The mathematical techniques put at work to proving solution existence
% and stability results in the case of the whole domain $\mathbb{R}^d$
% (see \cite{LZ08}, \cite{Mas08a}) are no longer suitable in the case 
% of bounded domains $\Omega \subset \mathbb{R}^d$. Indeed, when $\Omega = \mathbb{R}^d$
% (and also when $\Omega = [0, 1]^n$ with periodic conditions) the most
% difficult terms to handle can be simplified 
% by some calculations based on differentiating
% Navier-Stokes equations and taking the inner product in $L^2$ 
% with the corresponding derivative of the velocity.
% As opposed to infinite domains, working on bounded domains with
% Dirichlet boundary conditions require different techniques based
% on the use of the Leray projector.

In the general case, we show the following global existence and uniqueness result for initial data which is sufficiently close to the equilibrium solution $(0,\zbar)$.
\begin{theorem}[general case]\label{thgen}
Let $s\in (1,\frac32)$. Assume that $u_0$ is divergence free, vanishes on $\partial\Omega$ and belongs to $H^s(\Omega)$. Assume moreover that $\bM^{-\frac12}\|g_0\|_{H^s(\Omega)}\in L^2(D(0,R))$, $g_0\geq0$ and $\int_{D(0,R)}g_0\ud\bq\equiv1$. There exists two positive constants $K_1=K_1(\Omega,s)$ and $K_2=K_2(\Omega,s,\gamma,\text{\rm Re},\text{\rm We},N, R)$ such that if the fluid related coefficients verify the relation
\begin{equation}\label{condcoefftheo}
\frac{1-\gamma}{N\text{\rm We}}\leq K_1
\end{equation}
and if the initial data is sufficiently close to the equilibrium solution $(0,\zbar)$
\begin{equation*}
\|u_0\|_{H^s(\Omega)}\leq K_2\quad\text{and}\quad  \Bigl\|\frac{\|g_0-\zbar\|_{H^s(\Omega)}}{\sqrt{\bM}}\Bigr\|_{L^2(D(0,R)}\leq K_2
\end{equation*}
then there exists a unique solution to system \eqref{eq1}--\eqref{eq4} such that
\begin{gather*}
u\in L^\infty(\R_+;H^s(\Omega))\cap L^2(\R_+;  H^{s+1}(\Omega))\\
\intertext{and}
\bigl\|\bM^{-\frac12}\|g\|_{H^s(\Omega)}\bigr\|_{L^2(D(0,R)}\in L^\infty(\R_+),\quad
\bigl\|\bM^{\frac12}\|\nabla_{\bq}(g/\bM)\|_{H^s(\Omega)}\bigr\|_{L^2(D(0,R)}\in L^2(\R_+).
\end{gather*}
\end{theorem}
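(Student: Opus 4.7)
The plan is a perturbation analysis around the equilibrium $(0,\zbar)$. Writing $\psi=g-\zbar$ turns the constraint $\int g\,\ud\bq\equiv 1$ into $\int\psi\,\ud\bq\equiv 0$, and reduces the problem to showing global existence/uniqueness for small $(u,\psi)$. The essential structural observation is that, since $\bM$ solves the stationary Fokker-Planck problem up to normalization, the $\bq$-operator on the right of \eqref{eq2} can be rewritten in the self-adjoint entropic form $\frac{1}{2\text{We}\,N}\nabla_{\bq}\cdot\bigl(\bM\,\nabla_{\bq}(g/\bM)\bigr)$. This dictates the weighted norms in the statement: the natural energy for $\psi$ uses $\bM^{-1/2}$, and the natural dissipation uses $\bM^{1/2}\nabla_{\bq}(\psi/\bM)$.

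The core of the proof is a closed $H^s$ energy-dissipation inequality. Schematically, applying $s$-th order spatial derivatives to \eqref{eq1} and pairing with $u$, and applying them to \eqref{eq2} and pairing with $\psi/\bM$ integrated over $\bq$, one seeks an inequality of the form
$$\ddt\bigl(\|u\|_{H^s(\Omega)}^2+\|\psi\|_{\hl s}^2\bigr)+c\bigl(\|u\|_{H^{s+1}(\Omega)}^2+\|\psi\|_{\hdoth s}^2\bigr)\leq C\sqrt{\mathcal E}\,\mathcal D+\tfrac{1-\gamma}{N\,\text{We}}\,C_0\,\mathcal D,$$
where $\mathcal E,\mathcal D$ denote the energy and dissipation on the left. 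The nonlinear transport terms $u\cdot\nabla_x$ are controlled by the algebra property of $H^s(\Omega)$ in dimension two (valid since $s>1=d/2$), producing the cubic term that is absorbed once $\mathcal E$ is small. The delicate point is the coupling: the stress integral on the right of \eqref{eq1} and the drag $\nabla_{\bq}\cdot(\sigma(u)\bq g)$ on the left of \eqref{eq2} give rise to cross-terms whose leading (linear-in-$\psi$) pieces cancel after integration by parts in $\bq$, but only up to a residual whose prefactor scales like $(1-\gamma)/(N\,\text{We})$; condition \eqref{condcoefftheo} is precisely what lets this residual be absorbed by the dissipation $\mathcal D$.

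Solutions are then obtained by a standard approximation scheme (Galerkin truncation in $x$ combined with a cut-off of the singular weight near $|\bq|=R$) producing smooth approximants that satisfy the same estimate uniformly; weak-$*$ compactness and Aubin-Lions yield a limit, and uniqueness follows from the same type of energy argument on the difference $(u_1-u_2,\psi_1-\psi_2)$ executed at one derivative lower so that Gr\"onwall closes. Positivity of $g$ and the normalization $\int g\,\ud\bq\equiv 1$ are propagated from the initial data via a maximum principle and by integrating \eqref{eq2} in $\bq$. The main obstacle is closing the $H^s$ estimate on a bounded domain: unlike the $\R^2$ treatment of \cite{LZZ08}, Littlewood-Paley and paradifferential tools are not available, so commutators involving fractional powers of the Dirichlet Laplacian, the Leray projector, and the transport operator must be handled directly; the restriction $s<\tfrac32$ enters exactly to avoid demanding boundary vanishing of $(-\triangle)^{s/2}u$. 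The structural residual proportional to $(1-\gamma)/(N\,\text{We})$ in the coupling estimate is a new feature not present in the full-space analyses of \cite{LZ08,Mas08a}, and is the reason why the coefficient condition \eqref{condcoefftheo} cannot be dropped within the present method.
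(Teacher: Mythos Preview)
Your overall strategy---perturb around equilibrium, write $\psi=g-\zbar$ so that $\int\psi\,\ud\bq\equiv0$, close an $H^s$ energy--dissipation estimate, and absorb the coupling under a smallness condition on the coefficients---matches the paper. But there is a genuine gap in how you propose to obtain the $H^s$ estimate for $\psi$ on the bounded domain.

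You write that ``commutators involving fractional powers of the Dirichlet Laplacian, the Leray projector, and the transport operator must be handled directly''. This is precisely what the paper does \emph{not} do, and for good reason: Kato--Ponce type commutator bounds for $(-\Delta_{\mathrm{Dir}})^{s/2}$ or $\A^{s/2}$ acting on the transport term $u\cdot\nabla_x\psi$ are not available in the needed form on $\Omega$. The paper's device is to observe that $\psi$ carries no boundary condition in $x$, so one may extend it (and the velocity, via a stream-function extension preserving the divergence-free condition) to all of $\R^2$, solve the extended Fokker--Planck equation there, and perform the fractional estimates with the usual Fourier multiplier $\Lambda_x^s=\langle D\rangle^s$ on $\R^2$, where the classical commutator calculus of \cite{Tay91} applies. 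Restricting back to $\Omega$ gives the bound on $\psi$. For $u$, which \emph{does} have a boundary condition, fractional derivatives are taken through the Stokes operator $\A^{s/2}$; this is why the projector $\PP_n$ must be bounded on $H^{s-1}$, forcing $s-1<\tfrac12$. Without the extension step your $\psi$-estimate does not close.

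A second point: your description of the coupling as ``cross-terms whose leading pieces cancel after integration by parts in $\bq$, but only up to a residual'' is not what happens here. On a domain without boundary the two linear cross-terms (stress in the $u$-equation paired with $u$, and the forcing $-a\nabla_{\bq}\cdot[\sigma(u)\bq M]$ paired with $\psi/M$) do cancel exactly; on a bounded domain the Leray projector and the boundary terms destroy this cancellation entirely (see the paper's final remarks). The paper therefore bounds each linear cross-term separately, obtaining constants $C_1\alpha_2/\delta^2$ and $C_2/R^2$, and then chooses a weight $\omega$ so that the combined quadratic form can be absorbed by the dissipation. The algebraic condition for this to be possible reduces to $\alpha_1\alpha_3\delta^2R^2\ge 2C_1C_2\alpha_2$, which is exactly \eqref{condcoefftheo}. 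So \eqref{condcoefftheo} is not the size of a residual after cancellation; it is the condition under which two independent, uncancelled coupling terms can be simultaneously absorbed.
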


In the corotational case we improve the previous result in the following manner. Not only the restriction on the material coefficients \eqref{condcoefftheo} is no longer required, but the initial velocity $u_0$ is arbitrarily large as well. More precisely, we have the following theorem.
\begin{theorem}[corotational case]\label{thcor}
Let $s\in (1,\frac32)$. Assume that $u_0$ is divergence free, vanishes on $\partial\Omega$ and belongs to $H^s(\Omega)$. Assume moreover that $\|g_0\|_{H^s(\Omega)}/\sqrt{\bM}\in L^2(D(0,R)$, $g_0\geq0$ and $\int_{D(0,R)}g\ud\bq\equiv1$. There exists a positive constant $K_3=K_3(\Omega,s,\gamma,\text{\rm Re},\text{\rm We},N, R)$ such that if the following smallness assumption holds true
\begin{equation*}
\Bigl\|\frac{\|g_0-\zbar\|_{H^s(\Omega)}}{\sqrt{\bM}}\Bigr\|_{L^2(D(0,R)}\leq \exp\Bigl[-K_3(1+\|u_0\|_{H^s(\Omega)}) e^{K_3\|u_0 \|_{L^2(\Omega)}^{\frac4s}}\Bigr],
\end{equation*}
then there exists a unique solution to system \eqref{eq1}--\eqref{eq4} such that
\begin{gather*}
u\in L^\infty(\R_+;H^s(\Omega))\cap L^2(\R_+;  H^{s+1}(\Omega))\\
\intertext{and}
\bigl\|\bM^{-\frac12}\|g\|_{H^s(\Omega)}\bigr\|_{L^2(D(0,R)}\in L^\infty(\R_+),\quad
\bigl\|\bM^{\frac12}\|\nabla_{\bq}(g/\bM)\|_{H^s(\Omega)}\bigr\|_{L^2(D(0,R)}\in L^2(\R_+).
\end{gather*}
\end{theorem}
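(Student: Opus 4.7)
I would work around the equilibrium $f=g-\zbar$ and exploit the special algebra of the corotational drift $\sigma(u)=\nabla u-(\nabla u)^t$. First, $\zbar$ itself is a stationary solution of \eqref{eq2} for every admissible $u$: the $x$-transport vanishes since $\zbar$ is independent of $x$, while antisymmetry of $\sigma(u)$ together with the radiality of $\zbar$ give $\sigma(u)\bq\cdot\nabla_\bq\zbar=0$ and $\operatorname{tr}\sigma(u)=0$, hence $\nabla_\bq\cdot(\sigma(u)\bq\zbar)=0$. Consequently $f$ satisfies the same Fokker-Planck equation as $g$, and the polymer stress in \eqref{eq1} decomposes into an isotropic $\zbar$-contribution (absorbable into the pressure) plus an $f$-contribution. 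The system thus reduces to Navier-Stokes coupled to a Fokker-Planck equation for $f$ whose equilibrium is $f\equiv 0$.

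The basic $\ll$-estimate for $f/\sqrt\bM$ closes without any reference to $u$. Testing the $f$-equation against $f/\bM$ on $\Omega\times D(0,R)$, the $x$-transport disappears by $\dive u=0$, and the $\bq$-drift reduces after integration by parts to $-\frac12\int f^2\,\nabla_\bq\cdot(\sigma(u)\bq/\bM)\dq\dx$, which vanishes identically because $\operatorname{tr}\sigma(u)=0$ and $\sigma(u)\bq\cdot\nabla_\bq\bM\propto\sigma(u)\bq\cdot\bq=0$; boundary terms on $\partial D(0,R)$ vanish since $\bM$ does and $\sigma(u)\bq$ is tangent to $\partial D(0,R)$. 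Rewriting the dissipation as $\frac{1}{2\text{\rm We}N}\nabla_\bq\cdot(\bM\nabla_\bq(f/\bM))$ and invoking the weighted Poincaré inequality $\int(f/\bM)^2\bM\dq\leq C\int|\nabla_\bq(f/\bM)|^2\bM\dq$ (applicable because $\int f\dq\equiv 0$ is propagated by the flow) yields
\[\ddt\normll{f/\sqrt\bM}^2 + c\normll{f/\sqrt\bM}^2\leq 0.\]
Since $NR^2>2$ makes the polymer stress continuous from $\ll$ to $L^2_x$, the standard 2D Navier-Stokes energy estimate then gives $L^\infty_tL^2_x\cap L^2_tH^1_x$ control of $u$ depending only on $\|u_0\|_{L^2}$ and the small $f_0$.

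The main work is the $H^s_x$ estimate, performed simultaneously on $u$ and $f$ via fractional derivatives in $x$. For $u$, Kato-Ponce applied to $\dive(u\otimes u)$ combined with the Gagliardo-Nirenberg interpolation $\|u\|_{L^\infty}\leq C\|u\|_{L^2}^{1-1/s}\|u\|_{H^s}^{1/s}$ produces an inequality schematically of the form
\[\ddt\|u\|_{H^s(\Omega)}^2+\nu\|u\|_{H^{s+1}(\Omega)}^2\leq C\|u\|_{L^2}^{2(1-1/s)}\|u\|_{H^s(\Omega)}^{2+2/s}+C\bigl\|\text{polymer stress}\bigr\|_{H^{s-1}(\Omega)}^2,\]
whose Gronwall bound, after integrating $\|\nabla u\|_{L^2}^2$ in time against the previously-established $L^2_tH^1_x$ dissipation, yields the factor $\exp(K_3(1+\|u_0\|_{H^s})e^{K_3\|u_0\|_{L^2}^{4/s}})$ of the statement. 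For $f$, the commutators $[\Lambda_x^s,u\cgrad]f$ and $[\Lambda_x^s,\sigma(u)\bq]f$ are controlled by $\|u\|_{H^s(\Omega)}\normhl{f}{s}$ modulo borrowing from the $\hdoth{s}$-dissipation; the top-order contribution coming from $\sigma(u)\bq\,\Lambda_x^s f$ tested against $\Lambda_x^s f/\bM$ cancels by the very same antisymmetry/radiality mechanism as at the $L^2$ level. The coupled Gronwall on the pair $(\|u\|_{H^s(\Omega)},\normhl{f/\sqrt\bM}{s})$ closes precisely when the initial polymer perturbation is exponentially small with respect to this double-exponential factor, which is the smallness hypothesis of the theorem.

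The principal obstacle is this coupled $H^s$-level closure on a bounded domain, where the absence of Littlewood-Paley decomposition forces the use of Kato-Ponce type commutator estimates in $x$ combined with the $\bM$-weighted Poincaré inequality in $\bq$ to absorb lower-order terms; the Stokes operator with Dirichlet boundary conditions must be handled carefully to define the fractional derivative for $u$. Once the a priori estimates are secured, existence follows from a Galerkin approximation in $\bq$ using a basis adapted to the weight $\bM$ and a routine passage to the limit, while uniqueness is obtained by an energy estimate on the difference of two solutions at the base $\ll$-level, where the nonlinear terms are controlled by the $H^s$-bounds already established for each solution.
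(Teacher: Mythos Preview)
Your overall strategy---perturbation $f=g-\zbar$ from equilibrium, exploitation of the corotational cancellation to kill the forcing, an $L^2$ energy estimate followed by an $H^s$ closure with a double-exponential Gronwall---is the paper's strategy. However, there is a genuine technical gap in your $H^s$-level argument for $f$: on a bounded domain $\Omega$ the fractional operator $\Lambda_x^s$ and the Kato--Ponce commutator estimates are not directly available, since these tools live on $\R^2$. You correctly note that fractional derivatives of $u$ can be defined via the Stokes operator because $D(\A^{s/2})=H^s\cap V$ for $s\in(1,\frac32)$, but $f$ carries no boundary condition in the $x$-variable, so neither the Stokes calculus nor a boundary-compatible elliptic operator is available for it. The paper's resolution is to \emph{extend} both $u^n$ (via a stream-function extension $\ub^n=\nabla^\perp E(\str(u^n))$, preserving the divergence-free condition) and the initial data $\psi^n_0$ to $\R^2\times D$, then solve the extended Fokker--Planck equation on $\R^2\times D$ and perform all $\Lambda_x^s$-commutator estimates there; by uniqueness, the restriction to $\Omega\times D$ coincides with $\psi^n$. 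Without this extension step, your commutator argument has no rigorous meaning on $\Omega$.

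Two further points. First, the paper's approximation is Galerkin in $x$ on Stokes eigenfunctions, not Galerkin in $\bq$; this matters because the $H^s$ estimate on $u^n$ requires $\PP_n$ to be uniformly bounded on $H^{s-1}(\Omega)$, which holds precisely for $s-1<\frac12$ and is the origin of the restriction $s<\frac32$. Second, your schematic $H^s$ inequality for $u$ carries a superlinear term $\|u\|_{H^s}^{2+2/s}$ that would not close under Gronwall as written. The paper instead interpolates $\|u\cdot\nabla u\|_{H^{s-1}}\leq C\|u\|_{H^{s/2}}\|u\|_{H^{1+s/2}}\leq C\|u\|_{L^2}^{1-s/2}\|u\|_{H^1}^{s/2}\|u\|_{H^s}^{s/2}\|u\|_{H^{s+1}}^{1-s/2}$ and, after Young's inequality absorbing the $H^{s+1}$ factor into the dissipation, obtains a right-hand side of the form $h(t)\|\A^{s/2}u\|_{L^2}^2$ with $h(t)=C\al_1^{1-4/s}\|u\|_{L^2}^{4/s-2}\|\nabla u\|_{L^2}^2\in L^1(\R_+)$ and $\int_0^\infty h\leq C\al_1^{-4/s}\|u_0\|_{L^2}^{4/s}$ by the basic energy estimate; this is what produces the inner exponential $e^{K_3\|u_0\|_{L^2}^{4/s}}$ cleanly.
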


Compared to the result of \cite{LZZ08} valid in the case of the full plane, we have an additional condition on $g_0$: it needs to be close to $\zbar$. As explained above, this is due to the fact that we work with bounded domains and the methods of \cite{LZZ08} do not work here. Nevertheless, we have an improvement in the regularity assumptions. More precisely, we require a regularity in the $x$ variable which is $H^s$, $1<s<\frac32$, while in \cite{LZZ08,Mas08a} it is necessary to assume that $s>2$.  The regularity in the $\bq$ variable is also improved, roughly from $H^1$ to $L^2$.

The paper is organized as follows. In Section \ref{sbp}
we reformulate the problem \eqref{eq1}--\eqref{eq4} and introduce the notations. We construct next in Section \ref{as} a sequence of approximate solutions. The global existence of the approximate solutions is proved in Section \ref{approxex}. We show uniform estimates for the approximate solutions and complete the proofs of Theorems \ref{thgen} and \ref{thcor} in Section \ref{unif}. The last section contains two final remarks on the hypothesis we have to assume.

\section{Notations and functional framework}
\label{sbp}

We start by making a change of functions allowing to rewrite the equations in a better form. Notice first that
\begin{equation*}
 \nabla_{\bq } g+N
\dfrac{\bq }{1-\frac{|\bq |^2}{R^2}}g=\bM
\nabla_{\bq } \left(\frac g{\bM} \right).
\end{equation*}

If we set
\begin{gather*}
q=\frac{\bq}R, \quad  M(q)=(1-|q|^2)^\delta,\quad \f(t,x,q)=g(t,x,Rq)\\
\intertext{and}
\al_1= \dfrac \gamma {Re},\quad 
\al_2= \dfrac {\gamma(1-\gamma)} {\text{Re} \, \text{We}^2}
\left ( \dfrac {2 \delta} N \right )^{2},\quad
\al_3= \dfrac 1 {4\delta \, \text{We}}, \quad
\delta= \dfrac {NR^2} 2,
\end{gather*}
then the couple $(u,\f)$ must verify the following system of equations:
\begin{gather}%\label{sn:m}
 \partial_t u+u\cdot\nabla u-\al_1\triangle u+
\nabla p
=\al_2 \nabla_x \cdot \int_D \dfrac{q\otimes
  q}{1-|q|^2}\f 
\dq\quad \text{on } \R_+\times\Omega\label{fequ}\\
 \partial_t \f+u\cdot\nabla_x \f-\al_3
\nabla_q\cdot \left[M\nabla_q\left(\dfrac{\f}{M} \right)  \right]+
\nabla_q\cdot \left(\sigma(u)q\f \right)
=0\label{feqpsi}\\
\hskip 13 cm  \text{on } \R_+\times\Omega\times D(0,1)\nonumber\\
\intertext{and}
\dive u=0,\qquad \int_{D(0,1)} \f \dq\equiv\frac1{R^2}. \nonumber
\end{gather}
The boundary conditions are homogeneous Dirichlet conditions for the velocity $u$ plus some boundary conditions for $f$ on $\Omega\times \partial D(0,1)$ which are implicit from the condition that $f$ has the below defined $H^1_M$ regularity in the $q$ variable (we refer to \cite{Mas08a} for a discussion on the boundary conditions verified by functions in $H^1_M$).
We also prescribe the initial data $u\bigl|_{t=0}=u_0$ and $\f\bigl|_{t=0}=\f_0$. 

\bigskip

We recall now the usual spaces for the solutions of
the Navier-Stokes equations:
$$
H = \{ v \in (L^2(\Omega))^2; \; \; \nabla \cdot v = 0, \; v \cdot \nu =
0 \; \; \text { on } \; \partial \Omega \}
  $$
where $\nu$ is the outward normal to $\partial \Omega$ 
and
$$
V = \{ v \in (H_0^1(\Omega))^2; \; \; \nabla \cdot v = 0 \}.
  $$
We will abbreviate in the following $D=D(0,1)$. We introduce next the following Banach spaces:
\begin{gather*}
L^2_M=L^2(D,\frac1M\dq)=\bigl\{\varphi;\ \int_D\frac{\varphi^2}M\dq<\infty\bigr\},\\
 \ll=L^2(\Omega\times D,\frac1M\dx\dq)=\bigl\{\varphi;\ \iint_{\Omega\times D}\frac{\varphi^2}M\dx\dq<\infty\bigr\},\\ 
\lbarl=L^2(\R^2\times D,\frac1M\dx\dq)=\bigl\{\varphi;\ \iint_{\R^2\times D}\frac{\varphi^2}M\dx\dq<\infty\bigr\},\\
\hl\sigma=\bigl\{\varphi:\Omega\times D\to\C;\ \normhl\varphi\sigma:=\bigl\|\|\varphi\|_{H^{\sigma}(\Omega)}\bigr\|_{L^2_M(D)}<\infty\},\\
\hbarl\sigma=\bigl\{\varphi:\R^2\times D\to\C;\ \normhbarl\varphi\sigma:=\bigl\|\|\varphi\|_{H^{\sigma}(\R^2)}\bigr\|_{L^2_M(D)}<\infty\},\\
\hdoth\sigma=\bigl\{\varphi:\Omega\times D\to\C;\ \normhdoth\varphi\sigma:=\bigl\|\|M\nabla_q\bigl(\frac\varphi M\bigr)\|_{H^{\sigma}(\Omega)}\bigr\|_{L^2_M(D)}<\infty\},\\
\hbardoth\sigma=\bigl\{\varphi:\R^2\times D\to\C;\ \normhbardoth\varphi\sigma:=\bigl\|\|M\nabla_q\bigl(\frac\varphi M\bigr)\|_{H^{\sigma}(\R^2)}\bigr\|_{L^2_M(D)}<\infty\},\\
\hh\sigma=\hl\sigma\cap\hdoth\sigma.
\end{gather*}
The quantities (function spaces, vector fields, etc.) with a bar on top have the $x$ variable in $\R^2$.

Below, all functions $f$ and their different versions ($f^n$, $\widetilde f^n$, $g^n$, $\overline g^n$, etc.) are assumed to belong to spaces which are $H^1_M$ in the $q$ variable (which implies boundary conditions in the $q$ variable).

We have the following Poincaré type inequality. If $\varphi=\varphi(x,q)$ is such that $\int_D\varphi \dq\equiv0$, then
\begin{equation}
  \label{poincare}
\normll\varphi\leq C\normldoth\varphi,
\end{equation}
see \cite{Chu10}.

Let now denote by $\Lambda_x^\sigma$ the Fourier multiplier $\langle D\rangle^\sigma$, \textit{i.e.} the operator of multiplication in the Fourier space by $(1+|\xi|^2)^{\frac\sigma2}$. We will always apply this operator in the $x$ variable (the functions need to be defined on $\R^2$ of course). 

The divergence of a matrix is taken along rows: for $A=(a_{ij})$ we define $\dive A=(\sum_j\partial_j a_{ij})_i$. Scalar product of matrices is defined by $A:B=\sum_{i,j}a_{ij}b_{ij}$. The tensor product of two vectors $x$ and $y$ is the matrix $x\otimes y=(x_i y_j)_{ij}$.

\section{Construction of a sequence of approximate solutions}\label{as}

Clearly  $C^\infty_0 (D)$ is dense in $L_M^2$ and a positive function in  $L_M^2$ can be approached by a sequence of positive smooth functions (cut-off and convolution preserve the sign if the cut-off and convolution functions are non-negative). Moreover, we can assume that the integral of each of the approximate functions is equal to the integral of the limit function (otherwise we can normalize each of the approximate function by multiplying with an appropriate constant). In the $x$ variable, the usual standard smoothing procedure is also achieved by cut-off and convolution and these operations preserve the sign. Therefore, there exists ${f}^n_0\in C^\infty_0(\overline\Omega\times D)$, non-negative, such that 
\begin{equation}\label{fcond1}
 \f^n_0 \to\f_0 \quad\text{in }\hl s
\end{equation}
and
\begin{equation}
  \label{intfninit}
 \int_{D}{f}^n_0\dq\equiv R^{-2}. 
\end{equation}

Let us denote by $\PP $ the Leray projector, \textit{i.e.} the orthogonal projection operator from
$(L^2(\Omega))^d$ onto $H$, and by $\A$ the Stokes operator defined
by $ \A = - \PP  \Delta $. It is well known that $\A$ is a self-adjoint
operator on $H$ with compact inverse and that 
$ D (\A^{\sigma/2})= (H^\sigma(\Omega))^2 \cap V,\; \forall \sigma \in 
[1, \frac 3 2 )$ and  $ D (\A^{\sigma/2})= (H^\sigma(\Omega))^d \cap H,\; \forall \sigma \in 
[0, \frac 1 2 )$ with equivalent norms (see \cite{FM70}).  

Denote $\lambda_1,\lambda_2,\dots, \lambda_n,\dots$ the  sequence 
of eigenvalues of $\A$ and $v_1,v_2,\dots , v_n,\dots$ the
corresponding eigenvectors that form an orthonormal basis in $H$.  
Let $H_n:=\mathscr{L}\{v_1,v_2,\dots, v_n\}$ be the vector space spanned by the first $n$ eigenvectors of $\A$, and $\PP _n$ the orthogonal projection of $L^2(\Omega)$ onto $H_n$.  We endow $H_n$ with the $L^2$ norm making it a Hilbert space.  We observe that for any $\sigma\in [0,\frac12)$ there exists some constant $C(\sigma,\Omega)$ independent of $n$ such that for any $g\in H^\sigma(\Omega)$ we have 
\begin{equation}\label{boundpn}
  \|\PP_ng\|_{H^\sigma(\Omega)}\leq C(\sigma,\Omega) \|g\|_{H^\sigma(\Omega)}.
\end{equation}
Indeed, it is well-known that $\PP$ is bounded on $H^\sigma(\Omega)$. Then $\PP g\in H^\sigma(\Omega)\cap H= D (\A^{\sigma/2})$. Moreover, $\PP_n$ is an orthogonal projection in $D (\A^{\sigma/2})$ so we can write the following sequence of estimates:
\begin{multline*}
\|\PP_ng\|_{H^\sigma(\Omega)}\leq C \|\A^{\sigma/2}\PP_n\PP g\|_{L^2(\Omega)}= C\|\PP_n\A^{\sigma/2}\PP g\|_{L^2(\Omega)} \\
\leq C\|\A^{\sigma/2}\PP g\|_{L^2(\Omega)} 
\leq C' \|\PP g\|_{H^\sigma(\Omega)}
\leq C'' \|g\|_{H^\sigma(\Omega)},
\end{multline*}
which proves \eqref{boundpn}.

Letting $\PP $ operate on relation \eqref{fequ} leads to
\begin{equation*}%\label{as4}
\dfrac{\partial u}{\partial t}+\al_1 \A u+\PP (u\cdot\nabla u)=\al_2 \PP  \left[ \nabla_x \cdot \int_D \dfrac{q\otimes q}{1-|q|^2}\f \dq  \right] 
\end{equation*}

We consider the following approximation problem: find $ (u^n, \f^n)$
with $u^n \in C([0, T], H_n)$,  such that 
\begin{equation}\label{fas5}
\partial_t u^n+\al_1 \A u^n+\PP _n 
(u^n\cdot\nabla u^n)=\al_2 \PP _n \left[ \nabla_x \cdot \int_D 
\dfrac{q\otimes q}{1-|q|^2}\f^n \dq  \right] 
\end{equation}
and
\begin{equation}\label{fas6}
\partial_t \f^n + u^n \cdot \nabla_x \f^n
-\al_3 \nabla_q\cdot \left[M\nabla_q\left(\dfrac{{\f}^n}{M}
  \right)  \right]+ \nabla_q\cdot \left(\sigma(u^n) q {\f}^n 
\right)=0
\end{equation}
with respect to the initial conditions
\begin{equation}
u^n\bigl|_{t=0}=u^n_0\equiv\PP _n u_0, \qquad
{\f}^n\bigl|_{t=0}=\f^n_0. \label{fas9} 
\end{equation}
and such that $f^n$ has $H^1_M$ regularity in the $q$ variable. We will later use that
\begin{equation}
  \label{cond2}
 u^n_0\to u_0\qquad\text{in } H^s(\Omega).
\end{equation}

\section{Global existence of the approximate solutions}
\label{approxex}

Let us first remark that for any $ f \in H_n $ and any  $ m \in 
\mathbb{N} $, one has
\begin{equation}
\label{maj-Hn}
\| f \|_{H^m(\Omega)} \leq C(\Omega,m,n) \| f \|_{L^2(\Omega)}.
\end{equation}
Indeed, if $ f = \sum_{i = 1}^n \al_i  v_i $ then
$$
\| f \|_{H^m(\Omega)} \leq C \| \A^{m/2} f \|_{L^2(\Omega)}  = C \left (   \sum_{i = 1}^n \al_i^2 \lambda_i^m \right )^{1/2}
\leq C \max_{i \in {1, \dots n}} \lambda_i^{m/2} \|f\|_{L^2(\Omega)}.
   $$

Throughout this section $C$ denotes a constant that depends on $n$, $\Omega$, material coefficients and other constants. It may change from one line to another.

The existence of the approximate solutions is granted by the following theorem:
\begin{theorem}\label{existapprox}
There exists a global solution $ (u^n , \f^n) $  to the problem 
\eqref{fas5}-\eqref{fas9}, such that $ u^n \in C^0(\R_+; H_n) $.
\end{theorem}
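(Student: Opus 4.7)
The plan is to combine a Banach fixed-point argument for local well-posedness with $n$-dependent a priori energy estimates that rule out finite-time blow-up, and then extend by continuation to a global solution in $C^0(\R_+;H_n)$.

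For local existence I would decouple the system. For a small $T>0$, define a map $\Phi:(v,g)\mapsto(u^n,\f^n)$ on a closed ball in $C([0,T];H_n)\times\bigl(L^\infty([0,T];\ll)\cap L^2([0,T];\lh)\bigr)$, where $\f^n$ solves the linear Fokker--Planck equation \eqref{fas6} with $u^n$ replaced by $v$ (well-posed since $v(t)\in H_n$ is smooth in $x$ by \eqref{maj-Hn}, via a Galerkin scheme in the $q$ variable combined with the weighted Poincar\'e inequality \eqref{poincare}), and $u^n$ solves the ODE \eqref{fas5} on $H_n$ with $\f^n$ replaced by $g$ and initial datum $u_0^n$ (well-posed by Picard--Lindel\"of, since the linear stress map $g\mapsto \PP_n\bigl[\nabla_x\cdot\int_D\frac{q\otimes q}{1-|q|^2}g\dq\bigr]$ is bounded from $\ll$ into $H_n$, thanks to $\int_D(|q|^2/(1-|q|^2))^2M\dq<\infty$, which uses $\delta=NR^2/2>1$). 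Exploiting the linearity of \eqref{fas6} in $\f^n$ together with the local Lipschitz structure of \eqref{fas5} on $H_n$, one checks that $\Phi$ is a contraction for $T$ small enough (depending on $n$ and the data); its fixed point supplies a local solution.

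To extend globally I would derive two coupled energy estimates. Testing \eqref{fas5} with $u^n$ (the convective term vanishes by divergence freeness) and integrating the stress by parts gives
\begin{equation*}
\ddt\|u^n\|_{L^2}^2+\al_1\|\nabla u^n\|_{L^2}^2\le C\|\f^n\|_{\ll}^2,
\end{equation*}
while testing \eqref{fas6} with $\f^n/M$ (transport again vanishes), integrating the drift by parts in $q$, and using \eqref{maj-Hn} to bound $\|\nabla u^n\|_{L^\infty_x}\le C(n)\|u^n\|_{L^2}$ gives, after Young's inequality,
\begin{equation*}
\ddt\|\f^n\|_{\ll}^2\le C(n)\|u^n\|_{L^2}^2\|\f^n\|_{\ll}^2.
\end{equation*}
In the corotational case the right-hand side of the second estimate vanishes identically since the integrand contains the factor $q^T\sigma(u^n)q=0$, so $\|\f^n\|_{\ll}^2$ is non-increasing and $\|u^n\|_{L^2}^2$ then grows at most linearly. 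In the general case one uses instead the classical free-energy identity obtained by testing \eqref{fas6} with $\log(\f^n/M)$: after an exact cancellation between the stress term of the first equation and the drift term of the second (with weight $\al_2/(2\delta)$) one obtains $\ddt\bigl[\tfrac12\|u^n\|_{L^2}^2+\tfrac{\al_2}{2\delta}\iint M\,H(\f^n/M)\bigr]\le 0$ with $H(z)=z\log z-z+1\ge 0$, hence a uniform-in-time bound on $\|u^n\|_{L^2}^2$. Either way, Gronwall's inequality delivers finite bounds on $\|\f^n(t)\|_{\ll}^2$ over every bounded interval, and a standard continuation argument produces the global solution with $u^n\in C^0(\R_+;H_n)$.

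The main obstacle I foresee lies in the first step, namely solving the linear Fokker--Planck equation for $\f^n$ in the degenerate weighted setting: $M$ vanishes on $\partial D$, and the boundary conditions on $\f^n$ in $q$ are only implicit through its $H^1_M$ regularity. I would handle this via an inner Galerkin approximation in the $q$ variable using the spectral decomposition of the weighted operator $-\nabla_q\cdot(M\nabla_q(\cdot/M))$, combined with \eqref{poincare} applied after subtracting the mean in $q$ (preserved by the equation thanks to the conservative form of the $q$-diffusion and the normalization $\int_D\f^n\dq\equiv R^{-2}$).
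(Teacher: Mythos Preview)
Your proposal is essentially correct but differs from the paper's argument in two structural ways.

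First, the paper does a fixed point on $u^n$ alone, not on the pair. Given $v\in C([0,T];H_n)$ it constructs the flow map $\chi_n$ of $v$ and passes to Lagrangian coordinates $\ft^n(t,y,q)=\f^n(t,\chi_n(t,y),q)$; this kills the $x$-transport and reduces the Fokker--Planck equation to a $q$-only problem with $y$ as a parameter, whose well-posedness is imported from \cite{Mas08a}. This manoeuvre is precisely what dissolves the ``main obstacle'' you flag at the end: you never need a Galerkin scheme in $q$ or a direct treatment of the degenerate weight, because the $q$-problem is already solved in the literature. The resulting solution operator $\S:v\mapsto\f^n$ is then fed back into the Duhamel formula for $u^n$, and contraction is shown on a ball of $C([0,T_0];H_n)$. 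One point you gloss over and the paper makes explicit: the Lipschitz dependence of $\S$ on $v$ produces a term $(v-v')\cdot\nabla_x\S(v)$, so one must also propagate an $\hl1$ bound on $\f^n$ (the paper's estimate \eqref{bound7}); your contraction on the pair would need the same ingredient.

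Second, for the global continuation the paper makes a forward reference to the uniform $H^s$ bounds of Section~\ref{unif} to control $\|u^n\|_{L^2}$ on $[0,T]$, and then uses \eqref{bound6}--\eqref{bound7} to control $\|\f^n\|_{\hl1}$. Your route via the $L^2$ energy identity (corotational) or the free-energy/entropy identity (general, as in \cite{JBLO06}) is more self-contained and does not rely on the smallness hypotheses that Section~\ref{unif} ultimately invokes; it is arguably cleaner for this particular theorem, though you should note that the entropy test function $\log(\f^n/M)$ requires a positivity argument for $\f^n$ that you do not spell out.
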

\begin{proof}
In this proof the various constants $C, \; C_1, \; C_2,  \dots $ may depend on $n$ but are independent of time. We fix an arbitrary finite time $T>0$ and show that we can solve \eqref{fas5}-\eqref{fas9} up to time $T$ such that  $ u^n \in C^0([0,T]; H_n) $.

Suppose that $u^n$ is an element of $  C^0([0, T] ; H_n) $. From \eqref{maj-Hn} we deduce that
$$
\sup_{t \in [0, T]} \| u^n \|_{W^{1, \infty}(\Omega)} < + \infty.
  $$ 
Therefore one can construct  the flow $\chi_n(t, y)$ of $u^n$ as the unique solution to the equation
$$
\partial_t \chi_n(t, y) = u^n(t, \chi_n(t, y)), \quad \chi_n(0, y) = y.
  $$
Since the Jacobian determinant of $\chi_n(t, \cdot)$ is equal
to  1 we deduce that for any 
$ t \in [0, T] , \; 
\chi_n(t, \cdot) $ is a $C^\infty$ - diffeomorphism from $\Omega$ to $\Omega$.

Let $\ft^n(t,y,q)=\f^n(t,\chi_n(t,y),q)$.  Clearly $\f^n$ solves 
\eqref{fas6}--\eqref{fas9} if and only if $\ft^n$ solves
\begin{equation}
  \label{feqpsib}
\begin{cases}
\partial_t \ft^n+\nabla_q \cdot [\sigma(u^n)\circ\chi_n \,
q\ft^n] -
\al_3 \nabla_q \cdot \left [M\nabla_q\left (\frac{\ft^n} M\right)
\right ] = 0
\\
\qquad \ft^n(0,y,q)= \f^n_0(y,q).
\end{cases}
\end{equation}

In the equation above, the variable $y$ plays the role of a parameter
only. The existence, uniqueness and smoothness of a solution
 $\ft^n $ to
\eqref{feqpsib} which is $H^1_M$ in $q$ was proved in \cite{Mas08a}.
This allows to construct $\ft^n$, and therefore $\f^n$, if $u^n$ 
is given. We denote by $\S$ the operator that gives $\f^n$ in terms of $u^n$, $\f^n=\S(u^n)$. 

Since $u^n$ is smooth enough w.r.t. $x$ we also deduce that  $\ft^n$ is
smooth  enough w.r.t. $y$ and the same holds true for $\f^n$. Then $\f^n$
satisfies \eqref{fas6} and \eqref{fas9}. We observe moreover that
\begin{equation}\label{intfn}
\int_D f^n\dq\equiv\frac1{R^2}.  
\end{equation}
Indeed, from \eqref{intfninit} we know that the above relation is satisfied at time $t=0$. If we integrate with respect to $q$ relation \eqref{fas6} we have that the quantity $\int_D f^n\dq$ is transported by the vector field $u^n$ so it must be constant.

We conclude from the preceding observations that it suffices to show that there exists a global solution 
$u^n\in C^0([0, T]; H_n)$ of the following equation
\begin{equation}
  \label{equn}
\partial_t u^n+\PP _n(u^n\cdot \nabla  u^n) + \al_1 
\A u^n=\al_2 \PP _n \left (\nabla_x \cdot \int_D\S(u^n)F(q)\dq\right), \qquad 
u^n(0,x) = \PP _n  u_0  
\end{equation}
where $F(q)$ is the following matrix:
\begin{equation*}
F(q)=  \frac{q\otimes q}{1-|q|^2}.
\end{equation*}
Indeed, if such an $u^n$ is obtained, then the couple $(u^n,\S(u^n))$ solves \eqref{fas5}--\eqref{fas9}.

To solve \eqref{equn} we will use a fixed point method. More precisely, we write \eqref{equn} under the following equivalent integral form:
\begin{equation}\label{eqint}
%\begin{aligned}
u^n(t)
=e^{- \al_1 t \A } \PP _n u_0 + \int_0^t e^{\al_1 (s-t) \A } \PP _n 
\bigl[-u^n\cdot \nabla u^n + \al_2 \nabla_x \cdot 
\int_D\S(u^n)F(q)\dq\bigr](s) \ds
%\equiv e^{-\nu t A } \PP _n  u_0 + \H(u^n) 
%\end{aligned}
\end{equation}
We search for $u^n$ as a fixed point of the operator
\begin{equation*}
%\label{defa}
\B:C^0([0,T_0];H_n)\to C^0([0,T_0];H_n), \qquad  \B(v) \;
\text { is given by the rhs of \eqref{eqint}}
%e^{-\al_1t\A }\PP _n u_0 + \H(v),
\end{equation*}
where the time $T_0$ is to be chosen to ensure $\B$ is a contraction
mapping. Recall that $H_n$ is endowed  with the $L^2$ topology. One has:
\begin{multline}\label{bound2}
\|\B(v)-\B(v') \|_{L^2(\Omega)}
\leq \left \|{\int_0^t e^{\al_1(s-t)\A }\PP _n(v \cdot \nabla v-v' \cdot \nabla v')} \right \|_{L^2(\Omega)}\\
+ \al_2 \left \| {\int_0^t e^{\al_1(s-t)\A }\PP _n\bigl[\nabla_x \cdot 
\int_D(\S(v) - \S(v'))F(q) \dq\bigr]} \right \|_{L^2(\Omega)}\
\equiv I_1+I_2.
\end{multline}
Next:
\begin{equation}\label{bound3}
\begin{split}
I_1&=\Bigl\| {\int_0^t e^{\al_1 (s-t)\A } \PP _n[(v-v')\cgrad v+v'\cgrad(v- v')]} \Bigr\|_{L^2(\Omega)}\\
&\leq \int_0^t \left \| {(v-v')\cgrad v+v'\cgrad(v- v')} \right \|_{L^2(\Omega)}\\ 
&\leq C\int_0^t \|v-v'\|_{H^2(\Omega)}(\|v\|_{H^2(\Omega)}+\|v'\|_{H^2(\Omega)})  \\
&\leq Ct\sup_{[0,t]}\|{v-v'} \|_{L^2(\Omega)} (\sup_{[0,t]}\|v\|_{L^2(\Omega)} +\sup_{[0,t]}\|v'\|_{L^2(\Omega)} )
\end{split}
\end{equation}
where we used relation \eqref{maj-Hn}. To bound $I_2$, we observe first that we have the following inequality
\begin{equation*}
\| \PP _n \nabla \cdot  h\|_{L^2(\Omega)}^2=\sum_{i=0}^n| \scal{\nabla \cdot  h}
{\phi_i}|^2  
=\sum_{i=0}^n|  \scal {h}{\nabla\phi_i}|^2  
\leq\sum_{i=0}^n \|h\|_{L^2(\Omega)}^2\| {\nabla\phi_i}\|_{L^2(\Omega)}^2  
\leq C(n)\| h \|_{L^2(\Omega)}^2.
\end{equation*}
Therefore
\begin{equation}\label{bound4}
\begin{split}
I_2&\leq \al_2 \int_0^t \left \| {\PP _n \bigl[\nabla_x \cdot 
\int_D(\S(v)-\S(v'))F(q)
\dq\bigr]} \right \|_{L^2(\Omega)}  \\
&\leq Ct\sup_{[0,t]}\left \| {\int_D(\S(v)-\S(v'))F(q)\dq}
\right \|_{L^2(\Omega)} \\
&\leq Ct\sup_{[0,t]}\left \|\frac{\S(v)-\S(v')}{\sqrt M}
\right \|_{L^2(\Omega\times D)}\|\sqrt MF\|_{L^2(D)}\\
&\leq Ct\sup_{[0,t]}\|\S(v)-\S(v')\|_{\ll},
\end{split}
\end{equation}
where we used that $\delta>1$ to have $\|\sqrt MF\|_{L^2(D)}<\infty$.
Next, we remark that $\Phi\equiv\S(v)-\S(v')$ solves the equation
\begin{multline*}
\partial_t \Phi+(v-v')\cdot \nabla_x\S(v)+v'\cdot \nabla_x\Phi+
\nabla_q \cdot [\sigma(v-v')q \S(v)]\\
+\nabla_q \cdot (\sigma(v')q\Phi) 
- \al_3 \nabla_q \cdot \left [M\nabla_q\left (\frac{\Phi} M
\right )\right ]=0, 
\qquad \Phi(0,x,q)=0.  
\end{multline*}
We multiply the above relationship by $\dfrac\Phi M$ and integrate in $x$
and $q$ to obtain, after some straightforward calculations that
\begin{equation*}
%\label{bound5}
\begin{aligned}
\ddt \| \Phi \|_{\ll}^2
+2 \al_3 &\|\Phi\|_{\ldoth }^2=
2\iint  (v'-v)\cdot \nabla_x\S(v) 
\frac \Phi M  
+2\iint \sigma(v-v'):\left [\S(v)q\otimes \nabla_q\left(\frac\Phi
M\right )\right ] \\
& \qquad  \qquad   \qquad + 2\iint \sigma(v'):\Bigl[ \Phi q\otimes \nabla_q \Bigl ( 
\frac \Phi M \Bigr )\Bigr]  \\
\leq &2\|{v-v'}\|_{L^\infty(\Omega)} \|{\nabla_x\S(v)}\|_{\ll}\|\Phi\|_{\ll} 
+2 \|{\sigma(v-v')}\|_{L^\infty(\Omega)}
\|\S(v)\|_{\ll}\|\Phi\|_{\ldoth } \\
&\qquad\qquad\qquad+ 2\|{\sigma(v')}\|_{L^\infty(\Omega)} \|\Phi\|_{\ll} \|\Phi\|_{\ldoth }.
\end{aligned}
\end{equation*}
As in \eqref{intfn} we have that $\int_D\Phi\dq\equiv0$ so the Poincaré inequality \eqref{poincare} holds true for $\Phi$. We deduce, using inequality \eqref{maj-Hn}, that:
\begin{multline}
\label{bound5}
\ddt \| \Phi \|_{\ll}^2
+ \al_3 \|\Phi\|_{\ldoth }^2
\leq C \|v - v'\|_{L^2(\Omega)}^2 \bigl [  \|{\nabla_x\S(v)}\|_{\ll}^2 
+ \|\S(v)\|_{\ll}^2 \bigr ]\\
 + C \|v'\|_{L^2(\Omega)}^2  \| \Phi \|_{\ll}^2.
\end{multline}

Recall that $\S(v)$ solves the equation
\begin{equation}
\label{eqv}
%\begin{cases}
\partial_t \S(v)+v\cdot \nabla_x \S(v)+\nabla_q \cdot
[\sigma(v)q\S(v)]
-\al_3 \nabla_q \cdot \left [M\nabla_q\left (\dfrac{\S(v)} M\right)
\right ]= 0,\quad 
\S(v)(0,x,q)= \f^n_0(x,q)
%\end{cases} 
\end{equation}

Multiplying the equation of $\S(v)$  by $\dfrac {\S(v)} M$ and integrating w.r.t. $x,q$, we obtain
after similar estimates the following relation:
\begin{equation}
\label{bound5p}
\ddt  \| \S(v) \|_{\ll}^2
+ \al_3 \|\S(v)\|_{\ldoth }^2
 \leq C \|v\|_{L^2(\Omega)}^2  \| \S(v) \|_{\ll}^2  .
\end{equation}

We will use in what follows several times the following simplified version of the Gronwall
inequality:
$$
y' \leq a_1 y + a_2, \qquad y(0) = y_0
  $$
with $a_1, a_2$ positive constants, implies
$$
y(t) \leq (y_0 + a_2 t) e^{a_1 t}.
  $$

From \eqref{bound5p} it follows:
\begin{equation}
\label{bound6}
\|\S(v)(t)\|_{\ll}^2 \leq R_1^2 \left ( \sup_{t \in [0, T]} \|v\|_{L^2(\Omega)}^2 
\right )
\end{equation}
where we denoted, for any $z \in \mathbb{R}$:
$$
R_1^2(z) = \|\f^n_0\|_{\ll}^2 \exp{ (C zT) }.
  $$
Similarly, differentiating \eqref{eqv} with respect to $x_i$, 
multiplying by $\dfrac{\partial_{x_i}\S(v)}M$ and summing over $i$, 
gives after integration and some straightforward estimates that
\begin{align*}
\ddt   \| \nabla_x \S(v)& \|_{\ll}^2
+ 2\al_3 \|\nabla_x \S(v)\|_{\ldoth }^2\\
\leq  &C\|\nabla v\|_{L^\infty(\Omega)}\normll{\nabla_x \S(v)}^2
+C\|\sigma(v)\|_{L^\infty(\Omega)}\normll{\nabla_x \S(v)}\normldoth{\nabla_x \S(v)}\\
&\hskip 6cm +C\|\nabla\sigma(v)\|_{L^\infty(\Omega)}\normll{\S(v)}\normldoth{\nabla_x \S(v)}\\
\leq  &\al_3 \|\nabla_x \S(v)\|_{\ldoth }^2 
+C\normll{\nabla_x \S(v)}^2\|v\|_{L^2(\Omega)}(1+\|v\|_{L^2(\Omega)})
+C\normll{\S(v)}^2\|v\|_{L^2(\Omega)}^2
\end{align*}
Using \eqref{bound6} and the Gronwall lemma implies that
\begin{equation}
\label{bound7}
\|\nabla_x \S(v)(t)\|_{\ll}^2 \leq R_2^2 \left ( \sup_{t \in [0, T]} \|v\|_{L^2(\Omega)}^2 
\right )
\end{equation}
where
$$
R_2^2(z) = \left [ \| \nabla_x\f^n_0\|_{\ll}^2 + C z 
R_1^2(z) T \right ] \exp{ (C (z+ 1) T) }.
  $$
From \eqref{bound5}, \eqref{bound6} and  \eqref{bound7} we get
\begin{equation}
\nonumber
\| \S(v) - \S(v') \|_{\ll}^2 \leq C T
\| v - v'\|_{W_T}^2 \left [ R_1^2
\left (\|v\|_{W_T}^2 \right ) +
R_2^2 \left (\|v\|_{W_T}^2 \right )  \right ] 
\exp{\left (C T\|v'\|_{W_T}^2 \right )}
\end{equation}
where we denoted 
$$W_T = C([0, T]; H_n).$$

Combining the above with \eqref{bound2}, \eqref{bound3} and 
\eqref{bound4} yields
\begin{multline}\label{contract}
\|\B(v)-\B(v')\|_{W_T}\leq C_1T\|v-v'\|_{W_T} \left ( \|v\|_{W_T}+
\|v'\|_{W_T} \right ) \\
+  C_1 T^{3/2}\|v-v'\|_{W_T} \left [ R_1(\|v\|_{W_T}^2) + 
R_2(\|v\|_{W_T}^2)  \right ]
\exp{(C_1  \|v'\|_{W_T}^2 T)}
\end{multline}
By a procedure similar in nature to the one detailed right above, 
one gets:
\begin{equation}
\label{bounded}
\|\B(v)\|_{W_T}\leq \|u^n_0\|_{L^2(\Omega)} + C_2 T \|v\|_{W_T}^2 
+ C_2 T R_1(\|v\|_{W_T}^2)
\end{equation}
Let now $K_0$ be such that
\begin{equation*}
\max\left(\|u^n_0\|_{L^2(\Omega)}, \|\f^n_0\|_{\ll},
  \|\nabla_x \f^n_0 \|_{\ll} 
\right)\leq K_0. 
\end{equation*}
Now taking a fixed $s_0$ such that  $ s_0 >  K_0 $
(for example $s_0 = K_0 + 1$) we can choose a $T_0$ sufficiently 
small such that
\begin{equation*}
K_0+C_2T_0 s_0^2+C_2T_0 R_1(s_0^2) \leq s_0
\end{equation*}
and
\begin{equation*}
2 C_1 T_0 s_0 + C_1 T_0^{3/2} \bigl[R_1(s_0^2) + R_2(s_0^2) \bigr]
\exp(C_1 s_0^2 T_0)<1  
\end{equation*}
The above assumptions together with \eqref{contract} and
\eqref{bounded} show that the operator $\B$ is a contraction from the
closed ball $\overline{B}(0,s_0)$ of $W_{T_0}$ onto itself. The fixed
point theorem can therefore be applied to grant the existence of a local
in time solution on $[0,T_0]$. Moreover, the local time existence
$T_0$ depends only on the bound $K_0$ for the initial data $ u^n_0$ and
$\S(v)(0)=\f^n_0$. Starting from time $T_0$, the same argument can
be applied to extend the solution, and so on. We justify now that $T$ 
can be reached in this way in a finite number of
steps. When re-applying the fixed point
argument from time $T_0$, the new time of existence depends only on
$\max{(\|u^n(T_0)\|_{L^2(\Omega)}, \|\f^n(T_0)\|_{\hl1 })}$. 
But the estimates shown in the next section
%and Lemma \ref{mr14} respectively,
imply that any solution $(u^n(t),\f^n(t))$ on a time interval included in $[0,T]$ can be bounded in $L^2\times \hl1 $ independently of $t\in[0,T]$. Indeed, we will show in particular some $H^s(\Omega)$ bounds on $u^n$. This implies $L^2(\Omega)$ bounds on $u^n$ and by estimates \eqref{bound6} and \eqref{bound7} some $\hl1$ bounds on $\f^n$. This means that the time-existence $T_0$ can be chosen the same at each step, so the time $T$ will be reached in a finite number of steps. This completes the proof of Theorem \ref{existapprox}.
\end{proof}

\section{Uniform estimates for the sequence of approximate solutions and end of the proof}
\label{unif}

Let us introduce the new function
\begin{equation*}
\psi^n=f^n-aM(q) 
\end{equation*}
where
\begin{equation}
  \label{valoarea}
 a=\frac1{R^2\int_DM(q)\dq}=\frac{\delta+1}{\pi R^2}. 
\end{equation}

It is not hard to check that the couple $(u^n,f^n)$ verifies \eqref{fas5}--\eqref{fas9} if and only if the couple $(u^n,\psi^n)$ verifies the system of equations \eqref{as5}--\eqref{as9} below:
\begin{equation}\label{as5}
\partial_t u^n+\al_1 \A u^n+\PP _n 
(u^n\cdot\nabla u^n)=\al_2 \PP _n \left[ \nabla_x \cdot \int_D 
\dfrac{q\otimes q}{1-|q|^2}\psi^n \dq  \right] 
\end{equation}
and
\begin{equation}\label{as6}
\partial_t \psi^n + u^n \cdot \nabla_x \psi^n
-\al_3 \nabla_q\cdot \left[M\nabla_q\left(\dfrac{{\psi}^n}{M}
  \right)  \right]+ \nabla_q\cdot \left(\sigma(u^n) q {\psi}^n 
\right)=-a\nabla_q\cdot[\sigma(u^n)qM]
\end{equation}
with respect to the initial conditions
\begin{equation}%\label{as11} 
u^n\bigl|_{t=0}=u^n_0, \qquad 
{\psi}^n\bigl|_{t=0}=\psi^n_0\equiv f^n_0-aM(q). \label{as9} 
\end{equation}

We observe that relation \eqref{fcond1} can be rewritten as
\begin{equation}\label{cond1}
 \psi^n_0 \to\psi_0\equiv\f_0-aM \quad\text{in }\hl s
\end{equation}
as $n\to\infty$, while relation \eqref{intfn} is equivalent to
\begin{equation}\label{intpsin}
\int_D\psi^n\dq\equiv0.  
\end{equation}

\begin{remark}%\label{sr1}
We observe that
\begin{equation*}%\label{sr11}
\nabla_q\cdot(\sigma(u^n)qM)=\nabla_q\cdot(\sigma(u^n)q)M+(\sigma(u^n)q)\cdot\nabla_q M=(\sigma(u^n)q)\cdot\nabla_q M.
\end{equation*}
We used above that $tr[\sigma(u^n)]=0$ so $\nabla_q\cdot(\sigma(u^n)q)=0$. Since $\nabla_q M$ is proportional to $q$ we infer that if $\sigma(u^n)$ is skew-symmetric then 
\begin{equation*}%\label{sr13}
\nabla_q\cdot(\sigma(u^n)qM)=0.
\end{equation*}
We conclude that in the corotational case the right-hand side of the equation of $\psi^n$ given in relation \eqref{as6} vanishes.
\end{remark}

We will show now some uniform (in $n$) estimates on the approximate solutions  $u^n$ and $\psi^n$ constructed above. In the calculations below, $C$ is a generic notation for a constant that does not depend on $n$ and its numerical value changes from one calculation to another.

\paragraph{\large $L^2$ estimates in the corotational case.} Our first bound is a $L^2$ energy estimate on $u^n$. 
\begin{lemma} \label{energbound}
We have that
\begin{equation*}
\|u^n(t)\|_{L^2(\Omega)}^2+\al_1\int_0^t\|\nabla u^n\|_{L^2(\Omega)}^2\leq \|u_0\|_{L^2(\Omega)}^2+\al_4\normll{\psi_0}^2\qquad \forall t\geq0,  
\end{equation*}
where
\begin{equation*}
\al_4=\frac{\pi\al_2^2}{8\delta^4\al_1\al_3}.  
\end{equation*}
\end{lemma}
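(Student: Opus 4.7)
The plan is to derive separate energy estimates for $u^n$ (in $L^2(\Omega)$) and $\psi^n$ (in $\ll$), then combine them with a weight $\lambda$ chosen so that the coupling term from \eqref{as5} is exactly absorbed by the combined dissipation. Two features of the corotational case are essential: by the Remark above, the right-hand side of \eqref{as6} vanishes, and, as I will check, the quadratic drift contribution $(\nabla_q\cdot(\sigma(u^n)q\psi^n),\psi^n/M)$ also vanishes. Hence the $\psi^n$-balance is purely dissipative and no smallness on $u^n$ is required.

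First I would take the $L^2(\Omega)$ scalar product of \eqref{as5} with $u^n$. The convection term integrates to zero ($u^n\in H_n$, $\PP_n$ is self-adjoint on $L^2$, and $(u^n\cgrad u^n,u^n)=0$ by $\dive u^n=0$ and $u^n\bigl|_{\partial\Omega}=0$); the Stokes term yields $\al_1\|\nabla u^n\|_{L^2}^2$; and one integration by parts in $x$ turns the stress term into $-\al_2 T$, where $T=\iint \nabla u^n :\frac{q\otimes q}{1-|q|^2}\psi^n\dx\dq$. To control $T$ by the $\ldoth$-norm rather than the $\ll$-norm, I would use the identity $q(1-|q|^2)^{\delta-1}=-\frac{1}{2\delta}\nabla_q M$ together with an integration by parts in $q$; the zero-mean condition \eqref{intpsin} kills the resulting $\delta_{ij}\int\psi^n\dq$ term and gives
\begin{equation*}
\int_D \frac{q\otimes q}{1-|q|^2}\psi^n\dq = \frac{1}{2\delta}\int_D q\otimes M\nabla_q(\psi^n/M)\dq,
\end{equation*}
so that Cauchy--Schwarz in $q$ (with weight $M$) and then in $x$ produce $|T|\leq \frac{C_q}{2\delta}\|\nabla u^n\|_{L^2}\|\psi^n\|_{\ldoth}$, with $C_q^2=\int_D |q|^2 M\dq=\pi/[(\delta+1)(\delta+2)]\leq \pi/\delta^2$ by direct polar integration.

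Next I would multiply \eqref{as6} by $\psi^n/M$ and integrate over $\Omega\times D$: the $x$-transport vanishes as before, the diffusion yields $\al_3\|\psi^n\|_{\ldoth}^2$, and after one IBP in $q$ the drift reduces to $-\delta\iint (1-|q|^2)^{\delta-1}(q\cdot\sigma(u^n)q)(\psi^n/M)^2\dx\dq$ (the term with $\nabla_q\cdot(\sigma(u^n)q)$ is zero since $\dive u^n=0$), which vanishes by skew-symmetry of $\sigma(u^n)$ in the corotational case; and the right-hand side of \eqref{as6} is zero by the preceding Remark. Hence $\ddt\|\psi^n\|_{\ll}^2 + 2\al_3\|\psi^n\|_{\ldoth}^2=0$. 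To conclude, form $E:=\|u^n\|_{L^2}^2+\lambda\|\psi^n\|_{\ll}^2$ with $\lambda := \frac{\al_2^2 C_q^2}{8\delta^2\al_1\al_3}$; applying Young's inequality to the coupling, $2\al_2|T|\leq \al_1\|\nabla u^n\|^2 + 2\lambda\al_3\|\psi^n\|_{\ldoth}^2$, cancels exactly the two dissipations and gives $E'+\al_1\|\nabla u^n\|^2\leq 0$. Integrating in time, using $\|\PP_n u_0\|_{L^2}\leq \|u_0\|_{L^2}$, the convergence $\psi^n_0\to\psi_0$ in $\ll$ (which follows from \eqref{cond1} since $s>0$), and $\lambda\leq \al_4$ (which follows from $(\delta+1)(\delta+2)\geq\delta^2$), yields the stated inequality. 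The main and essentially only obstacle is the $q$-IBP that converts the coupling $T$ from a bound controlled by $\|\psi^n\|_{\ll}$ into one controlled by $\|\psi^n\|_{\ldoth}$; without it, the coupling would not be absorbed by the Fokker--Planck dissipation without invoking a Poincaré-type inequality (giving a worse constant) or a smallness hypothesis.
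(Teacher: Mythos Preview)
Your proof is correct and takes essentially the same approach as the paper: derive the purely dissipative $\ll$-balance for $\psi^n$ in the corotational case, the $L^2$-balance for $u^n$, convert the coupling term via the identity $q(1-|q|^2)^{\delta-1}=-\tfrac{1}{2\delta}\nabla_qM$ and one integration by parts in $q$, and absorb it by Young's inequality using the same bound $\|q\sqrt M\|_{L^2(D)}\leq\sqrt\pi/\delta$. The only cosmetic differences are that you kill the $\delta_{ij}$-term arising from the $q$-IBP via the zero-mean condition \eqref{intpsin} whereas the paper uses $\dive u^n=0$ (either works), and you package the two balances into a single weighted energy $E$ whereas the paper integrates them separately in time before combining.
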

\begin{proof}
This estimate is well-known in the case of domains without boundaries. We only have to check that it goes through in the case of domains with boundaries, and also that it is compatible with our approximation procedure. Moreover, we need to compute  the precise best constants so we will give a detailed proof.

Recall that  $\sigma(u^n)=\nabla u^n-(\nabla u^n)^t$. Given that $\sigma(u^n)$ is trace-free we have that
\begin{equation*}
\nabla_q\cdot(\sigma(u^n)q\psi^n)=\sigma(u^n)^t:q\otimes\nabla_q\psi^n.
\end{equation*}
We multiply next \eqref{as6} by $\psi^n/M$ and integrate in $x$ and $q$. After recalling that the right-hand side of \eqref{as6} vanishes in the corotational case, we obtain
\begin{equation}
\begin{split}\label{enfn}
\frac12\ddt\normll{\psi^n}^2+\al_3\normldoth{\psi^n}^2
&=-\iint_{\Omega\times D}\sigma(u^n)^t:q\otimes\nabla_q\psi^n \frac{\psi^n}M\\
&=-\frac12 \iint_{\Omega\times D}\sigma(u^n)^t:q\otimes\nabla_q(|\psi^n|^2)M\\
&=\frac12 \iint_{\Omega\times D}\sigma(u^n)^t:q\otimes\nabla_qM |\psi^n|^2\\
&=0.
\end{split}
\end{equation}
since $\sigma(u^n)$ is trace free and skew-symmetric and $q\otimes\nabla_qM$ is symmetric.

We multiply now \eqref{as5} by $u^n$ and integrate in space to obtain
\begin{equation}\label{enun}
\begin{split}
\frac12\ddt\|u^n\|_{L^2(\Omega)}^2+\al_1\|\nabla u^n\|_{L^2(\Omega)}^2
&=\al_2 \int_\Omega u^n \nabla_x \cdot \int_D 
\dfrac{q\otimes q}{1-|q|^2}\psi^n \dq \dx\\
&=-\al_2\iint_{\Omega\times D}\nabla u^n: \dfrac{q\otimes q}{1-|q|^2}\psi^n 
\end{split}
\end{equation}
Next, 
\begin{multline*}
-\al_2\iint_{\Omega\times D}\nabla u^n: \dfrac{q\otimes q}{1-|q|^2}\psi^n
= \frac{\al_2}{2\delta}\iint_{\Omega\times D}\nabla u^n: q\otimes \nabla_q M \ \frac{\psi^n}M 
=-\frac{\al_2}{2\delta}\iint_{\Omega\times D}\nabla u^n: q\otimes \nabla_q \Bigl(\frac{\psi^n}M\Bigr)M\\
\leq  \frac{\al_2}{2\delta}\|\nabla u^n\|_{L^2(\Omega)}\|q\sqrt M\|_{L^2(D)}\normldoth{\psi^n}
\leq \frac{\al_1}2\|\nabla u^n\|_{L^2(\Omega)}^2+\frac{\pi\al_2^2}{8\delta^4\al_1} \normldoth{\psi^n}^2.
\end{multline*}
We used above that
\begin{equation}\label{boundel}
\|q\sqrt M\|_{L^2(D)}=  \frac{\sqrt\pi}{\sqrt{(\delta+1)(\delta+2)}}\leq\frac{\sqrt\pi}{\delta}.
\end{equation}
We infer now from \eqref{enun} that
\begin{equation*}
 \ddt\|u^n\|_{L^2(\Omega)}^2+\al_1\|\nabla u^n\|_{L^2(\Omega)}^2 
\leq \frac{\pi\al_2^2}{4\delta^4\al_1} \normldoth{\psi^n}^2.
\end{equation*}
Integrating the above relation and \eqref{enfn} in time implies that
\begin{equation*}
\|u^n(t)\|_{L^2(\Omega)}^2+\al_1\int_0^t\|\nabla u^n\|_{L^2(\Omega)}^2\leq \|u^n_0\|_{L^2(\Omega)}^2+\  \frac{\pi\al_2^2}{8\delta^4\al_1\al_3}\normll{\psi^n_0}^2.  
\end{equation*}
This completes the proof of the lemma.
\end{proof}
\begin{remark}
A similar $L^2$ energy estimate holds true in the general case too, see \cite{JBLO06}.   
\end{remark}

\paragraph{\large $H^s$ estimates for $u^n$.} The next step is to prove $H^s$ estimates for $u^n$.
%\newpage
%
\begin{lemma}
%\label{estim-u}
There exist a constant $C_1> 0$ depending only on $\Omega$ and $s$ such that:
\begin{multline}\label{uhs}
%\label{ineg2}
\ddt \| \A^{\frac s2}u^n  \|^2_{L^2(\Omega)}
+  \al_1 \| \A^{\frac{1+s}2}u^n \|^2_{L^2(\Omega)} \leq
C_1\min\bigl[h(t) \| \A^{\frac s2}u^n\|_{L^2(\Omega)}^2,  \| \A^{\frac s2}u^n\|_{L^2(\Omega)}\| \A^{\frac {s+1}2}u^n\|_{L^2(\Omega)}^2 \bigr]\\
 +  C_1\frac{\al_2}{\delta^2} \| \A^{\frac{1+s}2}u^n \|_{L^2(\Omega)}  \normhdoth{\psi^n}s
\end{multline}
where the function $h$ is integrable on $\R_+$ and satisfies
\begin{equation}\label{inegh}
\int_0^\infty h(t) \, dt \leq 
\al_1^{-\frac4s}\Bigl( \|u_0 \|_{L^2(\Omega)}^2 + \al_4 \normll{\psi_0}^2\Bigr)^{\frac2s}.
\end{equation} 
\end{lemma}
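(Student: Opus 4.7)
The plan is to obtain the differential inequality by testing the projected momentum equation \eqref{as5} against $\A^s u^n$ in $L^2$. Using self-adjointness of $\A$ and the fact that $\A^s u^n\in H_n\subset H$, so that the pressure gradient and the projector $\PP_n$ act trivially in the pairing, one obtains
\begin{equation*}
\tfrac12\ddt\|\A^{s/2}u^n\|_{L^2}^2+\al_1\|\A^{(s+1)/2}u^n\|_{L^2}^2=-\langle(u^n\cdot\nabla)u^n,\A^s u^n\rangle+\al_2\langle\nabla_x\cdot G,\A^s u^n\rangle,
\end{equation*}
where $G(t,x)=\int_D\frac{q\otimes q}{1-|q|^2}\psi^n\dq$. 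Each right-hand side term will be bounded separately to produce the two contributions on the right of the stated inequality.

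For the stress term I would first integrate by parts in $q$: using $\frac{q\otimes q}{1-|q|^2}=-\frac{q\otimes\nabla_qM}{2\delta M}$ together with $\int_D\psi^n\dq=0$ (relation \eqref{intpsin}), one rewrites $G=\frac{1}{2\delta}\int_D q\otimes M\nabla_q(\psi^n/M)\dq$. A pointwise Cauchy--Schwarz in $L^2_M$ combined with Minkowski's integral inequality (in $q$) applied to the Banach-space norm $\|\cdot\|_{H^s(\Omega)}$ then yields $\|G\|_{H^s(\Omega)}\leq\frac{\sqrt\pi}{2\delta^2}\|\psi^n\|_{\hdoth{s}}$ via \eqref{boundel}. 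Pairing $\A^{(s-1)/2}$ against $\A^{(s+1)/2}$ and using the norm equivalence $\|\A^{(s-1)/2}v\|_{L^2}\simeq\|v\|_{H^{s-1}}$ (valid since $s-1\in(0,\tfrac12)$) produces the stated stress contribution $C_1\frac{\al_2}{\delta^2}\|\A^{(s+1)/2}u^n\|\|\psi^n\|_{\hdoth{s}}$.

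For the nonlinear term I would provide two complementary bounds. The second one, $\|\A^{s/2}u^n\|\|\A^{(s+1)/2}u^n\|^2$, follows from the same duality pairing together with the asymmetric 2D Sobolev product estimate $\|fg\|_{H^{s-1}}\leq C\|f\|_{H^{s+1}}\|g\|_{H^{s-1}}$ (valid for $s\in(1,\tfrac32)$ since $(s+1)+(s-1)-(s-1)=s+1>1=d/2$) applied with $f=u^n$, $g=\nabla u^n$. The first bound, $h(t)\|\A^{s/2}u^n\|^2$, rests on the divergence-free cancellation $\langle (u^n\cdot\nabla)\A^{s/2}u^n,\A^{s/2}u^n\rangle=0$ and a Kato--Ponce commutator estimate $\|[\A^{s/2},u^n\cdot\nabla]u^n\|_{L^2}\leq C\|\nabla u^n\|_{L^\infty}\|u^n\|_{H^s}$, then inserts the 2D Gagliardo--Nirenberg inequality $\|\nabla u^n\|_{L^\infty}\leq C\|\nabla u^n\|_{L^2}^{(s-1)/s}\|u^n\|_{H^{s+1}}^{1/s}$ and applies Young with conjugate exponents $(2s,2s/(2s-1))$ to absorb a $\tfrac{\al_1}{2}\|\A^{(s+1)/2}u^n\|^2$ factor into the left-hand side, leaving a remainder of the desired form with $h$ built out of $\|\nabla u^n\|_{L^2}$ and a small fractional power of $\|\A^{s/2}u^n\|$ that can be absorbed into the linear factor via bootstrap.

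The integrability $\int_0^\infty h\,dt\leq\al_1^{-4/s}E_0^{2/s}$, with $E_0=\|u_0\|_{L^2}^2+\al_4\|\psi_0\|_{\ll}^2$, then follows by Hölder's inequality in time combined with the $L^\infty_tL^2_x$ and $L^2_tH^1_x$ controls $\|u^n(t)\|_{L^2}^2\leq E_0$ and $\int_0^\infty\|\nabla u^n\|_{L^2}^2\leq\al_1^{-1}E_0$ furnished by Lemma \ref{energbound}; the exponent $-4/s$ on $\al_1$ traces back to the Young constant associated to the conjugate pair $(2s,2s/(2s-1))$. The main technical obstacle is the careful bookkeeping of fractional exponents through these interpolations on a bounded 2D domain, where the Fourier multiplier $\Lambda^s$ is unavailable and one must work throughout with the Stokes operator $\A$ and the equivalences $\|\A^{\sigma/2}v\|_{L^2}\simeq\|v\|_{H^\sigma}$ that hold only for $\sigma\in[0,\tfrac12)\cup[1,\tfrac32)$.
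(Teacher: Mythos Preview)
Your overall scheme (test \eqref{as5} against $\A^s u^n$, split into nonlinear and stress contributions) and your treatment of the stress term and of the second nonlinear bound are essentially the paper's. The gap is in your derivation of the \emph{first} nonlinear bound $h(t)\|\A^{s/2}u^n\|_{L^2}^2$.

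Two problems. First, the Kato--Ponce commutator estimate $\|[\A^{s/2},u^n\cdot\nabla]u^n\|_{L^2}\le C\|\nabla u^n\|_{L^\infty}\|u^n\|_{H^s}$ is not available for the Stokes operator on a bounded domain: such estimates rely on the Fourier/Littlewood--Paley machinery that the paper explicitly avoids (and $(u^n\cdot\nabla)u^n$ is not even in $H$, so $\A^{s/2}$ does not act on it without a projection, which further spoils the commutator structure). Second, even if you grant the commutator bound, after your Gagliardo--Nirenberg step and Young with exponents $(2s,\tfrac{2s}{2s-1})$ the remainder is
\[
C\,\al_1^{-1/(2s-1)}\|\nabla u^n\|_{L^2}^{2(s-1)/(2s-1)}\,\|\A^{s/2}u^n\|_{L^2}^{4s/(2s-1)},
\]
with exponent $4s/(2s-1)>2$ on $\|\A^{s/2}u^n\|$. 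This cannot be written as $h(t)\|\A^{s/2}u^n\|_{L^2}^2$ with $h$ controlled by the $L^2$ energy alone, so \eqref{inegh} is out of reach; the ``bootstrap'' you invoke is circular here since $h$ must be estimated \emph{before} any $H^s$ control is established.

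The paper avoids commutators altogether. It pairs $\A^{(s-1)/2}\PP_n(u^n\cdot\nabla u^n)$ against $\A^{(s+1)/2}u^n$ (using \eqref{boundpn} and $s-1\in(0,\tfrac12)$) and bounds $\|u^n\cdot\nabla u^n\|_{H^{s-1}}$ directly via the product rule $H^{s/2}\cdot H^{s/2}\hookrightarrow H^{s-1}$, obtaining
\[
\|u^n\cdot\nabla u^n\|_{H^{s-1}}\le C\|u^n\|_{H^{s/2}}\|u^n\|_{H^{1+s/2}}
\le C\|u^n\|_{L^2}^{1-s/2}\|u^n\|_{H^1}^{s/2}\|u^n\|_{H^s}^{s/2}\|u^n\|_{H^{s+1}}^{1-s/2}.
\]
After multiplying by $\|\A^{(s+1)/2}u^n\|_{L^2}$ and applying Young with conjugate exponents $(\tfrac{4}{4-s},\tfrac{4}{s})$, the exponent on $\|\A^{s/2}u^n\|$ is exactly $2$, and the coefficient
\[
h(t)=C\,\al_1^{1-4/s}\|u^n\|_{L^2}^{4/s-2}\|\nabla u^n\|_{L^2}^2
\]
depends only on energy-level norms; \eqref{inegh} then follows immediately from Lemma~\ref{energbound}. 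This specific choice of interpolation indices is the idea you are missing.
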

\begin{proof}
Let us take the scalar product in $L^2(\Omega)$ of  \eqref{as5} 
with $\A^{s} u^n$. We get
\begin{multline*}
%\label{ee1}
\dfrac{1}{2}\ddt \| \A^{\frac s2}u^n  \|^2_{L^2(\Omega)}
+  \al_1 \| \A^{\frac{1+s}2}u^n  \|^2_{L^2(\Omega)} =
-\int_\Omega \A^{\frac{s-1}2} \PP _n (u^n\cdot \nabla u^n)  \cdot
\A^{\frac{s+1}2} u^n \dx \\
 +  \al_2 \int_\Omega \A^{\frac{s-1}2} \PP _n  \bigl [ \nabla_x \cdot \int_D 
\dfrac{q\otimes q}{1-|q|^2} \psi^n \dq  \bigr]  \cdot \A^{\frac{s+1}2} u^n \dx.
\end{multline*}
Using that $s\in (1,\frac32)$ and relation \eqref{boundpn} we infer that
\begin{multline}\label{ee1}
\ddt \| \A^{\frac s2}u^n  \|^2_{L^2(\Omega)}
+  2\al_1 \| \A^{\frac{1+s}2}u^n  \|^2_{L^2(\Omega)}  \leq
C \| u^n\cdot \nabla u^n\|_{H^{s-1}} \| \A^{\frac{1+s}2}u^n  \|_{L^2(\Omega)}  \\
 +  \al_2  \bigl\| \A^{\frac{s-1}2} \PP _n  \bigl [ \nabla_x \cdot \int_D 
\dfrac{q\otimes q}{1-|q|^2} \psi^n \dq  \bigr]  \bigr\|_{L^2(\Omega)}\| \A^{\frac{1+s}2}u^n  \|_{L^2(\Omega)}.  
\end{multline}

Standard product rules and interpolation in Sobolev spaces imply that
\begin{equation*}
 \| u^n\cdot \nabla u^n\|_{H^{s-1}}  \leq C \| u^n \|_{H^{\frac s2}}
\| u^n \|_{H^{1+\frac s 2}}
\leq C \| u^n \|_{L^2}^{1-\frac s2}\| u^n \|^{\frac s2}_{H^1}
\| u^n \|^{\frac s2}_{H^{s}}\| u^n \|^{1-\frac s2}_{H^{1+s}}. 
\end{equation*}
We infer that
\begin{equation*}%\label{ineg1}
\begin{split}
 C \| u^n\cdot \nabla u^n\|_{H^{s-1}} \| \A^{\frac{1+s}2}u^n  \|_{L^2(\Omega)}  
&\leq C \| u^n \|_{L^2}^{1-\frac s2}\| u^n \|^{\frac s2}_{H^1}
\| \A^{\frac s2}u^n \|^{\frac s2}_{L^2}\| \A^{\frac{1+s}2}u^n \|^{2-\frac s2}_{L^2}\\
&\leq \al_1\| \A^{\frac{1+s}2}u^n \|^2_{L^2} +C \al_1^{1-\frac4s}\| u^n \|_{L^2}^{\frac 4s-2}\| \nabla u^n \|^2_{L^2}
\| \A^{\frac s2}u^n \|^2_{L^2}.
\end{split}
\end{equation*}
On the other hand, using again the product rules in Sobolev spaces we can also estimate
\begin{equation*}
\| u^n\cdot \nabla u^n\|_{H^{s-1}}\leq C\| u^n\|_{H^{s}}\| \nabla u^n\|_{H^{s-1}}\leq C\| u^n\|^2_{H^{s}}
\leq C\| \A^{\frac s2}u^n \|^2_{L^2}\leq C\| \A^{\frac s2}u^n \|_{L^2}\| \A^{\frac {s+1}2}u^n \|_{L^2}.
\end{equation*}
Hence
\begin{multline}
  \label{ineg1}
C \| u^n\cdot \nabla u^n\|_{H^{s-1}} \| \A^{\frac{1+s}2}u^n  \|_{L^2(\Omega)}  
\leq   \al_1\| \A^{\frac{1+s}2}u^n \|^2_{L^2}\\ 
+C\min\bigl[\al_1^{1-\frac4s}\| u^n \|_{L^2}^{\frac 4s-2}\| \nabla u^n \|^2_{L^2}
\| \A^{\frac s2}u^n \|^2_{L^2}, \| \A^{\frac s2}u^n \|_{L^2}\| \A^{\frac {s+1}2}u^n \|^2_{L^2}\bigr]
\end{multline}

To bound the last term in \eqref{ee1} we observe that
\begin{align*}
\nabla_x \cdot \int_D 
\dfrac{q\otimes q}{1-|q|^2} \psi^n \dq
&=-\frac1{2\delta}\nabla_x \cdot\int_D\frac{\psi^n}M[\nabla_q\otimes(qM)]\dq
+\frac1{2\delta}\nabla_x \cdot\bigl[\operatorname{Id}\int_D  \frac{\psi^n}M\dq \bigr]\\
&=\frac1{2\delta}\nabla_x \cdot\int_D\nabla_q\bigl(\frac{\psi^n}M\bigr)\otimes qM\dq
+\frac1{2\delta}\nabla_x \cdot\bigl[\operatorname{Id}\int_D  \frac{\psi^n}M\dq \bigr]
\end{align*}
where $\operatorname{Id}$ denotes the identity matrix. The last term above is a gradient, so it belongs to the kernel of $\PP_n$. Therefore, the last term in \eqref{ee1} may be estimated as follows
\begin{align}
\bigl\| \A^{\frac{s-1}2} \PP _n  \bigl [ \nabla_x \cdot \int_D 
\dfrac{q\otimes q}{1-|q|^2} \psi^n \dq  \bigr]  \bigr\|_{L^2(\Omega)}
&= \frac1{2\delta} \bigl\| \A^{\frac{s-1}2} \PP _n  \bigl [ \nabla_x \cdot \int_D\nabla_q\bigl(\frac{\psi^n}M\bigr)\otimes qM\dq\bigr]  \bigr\|_{L^2(\Omega)}\notag\\
&\leq \frac C\delta\normhdoth{\psi^n}s\|q\sqrt M\|_{L^2(D)}\label{in-tau}\\
&\leq  \frac C{\delta^2}\normhdoth{\psi^n}s\notag
\end{align}
where we used \eqref{boundel}.

Relation \eqref{uhs} follows from relations \eqref{ee1}, \eqref{ineg1} and \eqref{in-tau} if we set
\begin{equation*}
h(t)=  C \al_1^{1-\frac4s}\| u^n \|_{L^2}^{\frac 4s-2}\| \nabla u^n \|^2_{L^2}
\end{equation*}
for some suitable constant $C$. Relation \eqref{inegh} is a consequence of Lemma \ref{energbound} and this completes the proof of the lemma.
\end{proof}

\paragraph{\large $H^s$ estimates for $\psi^n$.} We  need now estimates on $\psi^n$. For technical reasons, in order to obtain these estimates we need to work in $\R^2$ for the $x$ variable. Because there are no boundary conditions for $\psi$ in the $x$ variable, it is possible to extend the equation  of $\psi^n$ to $\R^2\times D$. 

Let $E$ be a total extension operator from $\Omega$ to $\R^2$, \textit{i.e.} a linear operator bounded from $H^\sigma(\Omega)$ to $H^\sigma(\R^2)$ for every $\sigma\geq0$ (in fact we  only need it to be bounded for $\sigma\in[0,3]$, that is we only need a 3-extension operator). The existence of such operators is well-known, see\textit{ e.g.} \cite[Chapter 4]{Ada75}. 

We define now
\begin{equation*}
 \psib^n_0=E(\psi^n_0) 
\end{equation*}
so that $\psib^n_0\in \hbarl s$ and 
\begin{equation}\label{cond3}
\normhbarl{\psib^n_0}s\leq C_0  \normhl{\psi^n_0}s,
\end{equation}
where $C_0$ depends only on $\Omega$. The way the extension operator is constructed in \cite{Ada75} ensures that the integral in the $q$ variable is preserved:
\begin{equation}\label{intpsibinit}
\int_D\psib^n_0\dq\equiv0
\end{equation}
since \eqref{intpsin} holds true.

Next, we want to extend $u^n$ to a  smooth  divergence free vector field defined on $\R^2$. In order to preserve the divergence free condition, we need to introduce the stream function. For a divergence free vector field $v$ defined on $\Omega$ and vanishing on the boundary of $\Omega$, it is well-known that there exists a stream function,  \textit{i.e.} a scalar function $\str$ such that $v=\nabla^\perp\str$. Moreover, since $v$ vanishes  on $\partial\Omega$ we have that $\str$ is constant on each connected component of $\partial\Omega$. Let $\Gamma_0$ be such a connected component. Since $\Omega$ is connected, we clearly have existence and uniqueness of $\str$ if we impose that $\str$ vanishes on $\Gamma_0$. In the sequel,  we define $\str(v)$ as the unique stream function  of $v$ vanishing on $\Gamma_0$. By the Poincaré inequality, we have that $\str$  is bounded from $H^\sigma(\Omega)\cap H_0^1(\Omega)$ to $H^{\sigma+1}(\Omega)$ for all $\sigma\geq1$. 

We define now $\ub^n=\nabla^\perp  E(\str(u^n))$. Clearly $\ub^n\bigl|_\Omega=u^n$ and 
\begin{equation}\label{boundubar}
\|\ub^n\|_{H^\sigma(\R^2)}\leq C   \|E(\str(u^n))\|_{H^{\sigma+1}(\R^2)}\leq C   \|\str(u^n)\|_{H^{\sigma+1}(\Omega)}\leq C   \|u^n\|_{H^\sigma(\Omega)}\qquad \forall \sigma\geq1,
\end{equation}
where $C=C(\Omega,\sigma)$.

We finally define $\psib^n$ as the unique solution in $\R^2\times D$ of the PDE
\begin{equation}\label{eqpsib1}
\partial_t \psib^n + \ub^n \cdot \nabla_x \psib^n
-\al_3 \nabla_q\cdot \left[M\nabla_q\left(\dfrac{{\psib}^n}{M}
  \right)  \right]+ \nabla_q\cdot \left(\sigma(\ub^n) q {\psib}^n 
\right)=- a \nabla_q \cdot \left[ \sigma(\ub^n) q M \right].
\end{equation}
The existence and uniqueness of such a $\psib^n$ follows from the argument given at the beginning of the proof of Theorem \ref{existapprox} (the variable $x$ plays the role of a parameter only). By uniqueness of solutions of \eqref{as6} and \eqref{as9}, we have that $\psib^n\bigl|_{\Omega\times D}=\psi^n$. Moreover, given \eqref{intpsibinit} we can prove as for $\psi^n$ that relation \eqref{intpsin} holds true for $\psib^n$:
\begin{equation}\label{intpsibn}
\int_D\psib^n\dq\equiv0.  
\end{equation}

The following lemma gives our estimates on $\psib^n$. 

\begin{lemma}
%\label{l6}
There exist a constant $C_2> 0$ depending only on $\Omega$ and $s$ such that 
\begin{multline}\label{psihsgen}
\ddt \normhbarl{\psib^n}s^2+2\al_3\normhbardoth{\psib^n}s^2\leq C_2 \|\A^{\frac{1+s}2}u^n\|_{L^2(\Omega)}  \normhbarl{\psib^n}s\normhbardoth{\psib^n}s\\
+\frac{C_2}{R^2}\|\A^{\frac{1+s}2}u^n\|_{L^2(\Omega)}  \normhbardoth{\psib^n}s
\end{multline}
in the general case and 
\begin{equation}\label{psihscor}
\ddt \normhbarl{\psib^n}s^2+2\al_3\normhbardoth{\psib^n}s^2\leq C_2 \|\A^{\frac{1+s}2}u^n\|_{L^2(\Omega)}  \normhbarl{\psib^n}s\normhbardoth{\psib^n}s.
\end{equation}
in the corotational case.
\end{lemma}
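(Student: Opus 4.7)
The plan is to apply the Fourier multiplier $\Lambda_x^s$ in the $x$ variable to equation \eqref{eqpsib1}, take the $L^2$ scalar product with $\Lambda_x^s\psib^n/M$, and integrate on $\R^2\times D$. Since $\Lambda_x^s$ commutes with $\nabla_q$ and with multiplication by functions of $q$ alone, the time derivative contribution equals $\tfrac12\ddt\normhbarl{\psib^n}s^2$ and the Fokker--Planck diffusion term, after integration by parts in $q$ (the boundary term vanishes because $M$ vanishes on $\partial D$), equals $\al_3\normhbardoth{\psib^n}s^2$. What remains is to bound by the right-hand side of \eqref{psihsgen} or \eqref{psihscor} the three contributions coming from the $x$-transport, from the $q$-drag term, and (only in the general case) from the forcing $-a\nabla_q\cdot(\sigma(\ub^n)qM)$.

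For the transport term I decompose $\Lambda_x^s(\ub^n\cdot\nabla_x\psib^n) = \ub^n\cdot\nabla_x\Lambda_x^s\psib^n + [\Lambda_x^s,\ub^n\cdot\nabla_x]\psib^n$; the first summand integrates to zero after an integration by parts because $\dive_x\ub^n=0$. For the commutator I would use a Kato--Ponce-type estimate at fixed $q$ together with the Sobolev embeddings $H^s(\R^2)\hookrightarrow L^\infty$ and $H^{s+1}(\R^2)\hookrightarrow W^{1,\infty}$ (both valid since $s>1$) to obtain $\|[\Lambda_x^s,\ub^n\cdot\nabla_x]\psib^n\|_{L^2_x}\leq C\|\ub^n\|_{H^{s+1}}\|\psib^n\|_{H^s(\R^2)}$. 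Cauchy--Schwarz in $q$ then gives a bound of the form $C\|\ub^n\|_{H^{s+1}}\normhbarl{\psib^n}s^2$, which is converted to $C\|\ub^n\|_{H^{s+1}}\normhbarl{\psib^n}s\normhbardoth{\psib^n}s$ via the Poincaré inequality \eqref{poincare} applied to $\Lambda_x^s\psib^n$; this is legitimate because the $q$-integral of $\Lambda_x^s\psib^n$ vanishes by \eqref{intpsibn}.

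For the $q$-drag contribution I integrate by parts in $q$ and reduce to controlling $\|\Lambda_x^s(\sigma(\ub^n)\psib^n)\|_{L^2_x}$ against $\|\nabla_q(\Lambda_x^s\psib^n/M)\|_{L^2_x}$ weighted by $|q|$. The fractional Leibniz rule combined with the same Sobolev embeddings yields $\|\Lambda_x^s(\sigma(\ub^n)\psib^n)\|_{L^2_x}\leq C\|\ub^n\|_{H^{s+1}}\|\psib^n\|_{H^s(\R^2)}$, and Cauchy--Schwarz in $q$ with the splitting $|q| = (|q|/\sqrt M)\cdot\sqrt M$ produces $C\|\ub^n\|_{H^{s+1}}\normhbarl{\psib^n}s\normhbardoth{\psib^n}s$. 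The key point here is that $|q|\leq 1$ on $D$, so $\int_D|q|^2\|\psib^n\|_{H^s(\R^2)}^2/M\,\dq\leq\normhbarl{\psib^n}s^2$ and no Hardy-type inequality is required. In the corotational case the forcing term in \eqref{eqpsib1} vanishes, which completes \eqref{psihscor}. In the general case, integrating by parts in $q$ transforms the forcing contribution into $a\iint qM\Lambda_x^s\sigma(\ub^n)\cdot\nabla_q(\Lambda_x^s\psib^n/M)$, and two Cauchy--Schwarz inequalities together with the bound \eqref{boundel} on $\|q\sqrt M\|_{L^2(D)}$ and the explicit value $a=(\delta+1)/(\pi R^2)$ give the remaining term $\tfrac{C_2}{R^2}\|\ub^n\|_{H^{s+1}}\normhbardoth{\psib^n}s$.

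Finally, in all of the above estimates the norm $\|\ub^n\|_{H^{s+1}(\R^2)}$ is replaced by $\|u^n\|_{H^{s+1}(\Omega)}$ using \eqref{boundubar}, and then by $\|\A^{(1+s)/2}u^n\|_{L^2(\Omega)}$ via the eigenvector expansion of $u^n\in H_n$ in terms of the $v_k$. The main technical obstacle I foresee is the careful justification of the Kato--Ponce fractional Leibniz and commutator estimates in the borderline range $s\in(1,\tfrac32)$ and ensuring that the $q$-weights produce exactly the product $\normhbarl{\psib^n}s\normhbardoth{\psib^n}s$ rather than $\normhbardoth{\psib^n}s^2$, since a bound of the latter form could not be absorbed by the diffusion term on the left-hand side when $u^n$ is not assumed small.
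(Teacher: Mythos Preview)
Your proposal is correct and follows essentially the same approach as the paper's proof: apply $\Lambda_x^s$ to \eqref{eqpsib1}, multiply by $\Lambda_x^s\psib^n/M$, integrate, and bound the three contributions $I_1,I_2,I_3$ using (respectively) a commutator estimate, the fractional Leibniz rule after integration by parts in $q$, and the elementary bound \eqref{boundel} on $\|q\sqrt M\|_{L^2(D)}$, finishing with \eqref{boundubar} and the Poincar\'e inequality \eqref{poincare}. The only cosmetic difference is that the paper writes the transport commutator as $\dive_x[\Lambda_x^s,\ub^n]\psib^n$ and bounds it in $H^1$ via the estimates in \cite{Tay91}, whereas you write it as $[\Lambda_x^s,\ub^n\cdot\nabla_x]\psib^n$ in $L^2$; these are the same object since $\ub^n$ is divergence free, and your concern about the range $s\in(1,\tfrac32)$ being ``borderline'' is unfounded here.
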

\begin{proof}
We apply the operator $\Lambda_x^s$ to \eqref{eqpsib1}, multiply by $\Lambda_x^s \psib^n/M$ and integrate in $x$ and $q$ to obtain
\begin{align*}
\frac12\ddt\normhbarl{\psib^n}s^2+\al_3\normhbardoth{\psib^n}s^2
&=-\iint_{\R^2\times D}\Lambda_x^s( \ub^n \cdot \nabla_x \psib^n)\frac{\Lambda_x^s\psib^n}M
- \iint_{\R^2\times D}\Lambda_x^s\nabla_q\cdot \left(\sigma(\ub^n) q {\psib}^n \right)\frac{\Lambda_x^s\psib^n}M\\
&\hskip 5cm -a\iint_{\R^2\times D}\Lambda_x^s\nabla_q \cdot \left[ \sigma(\ub^n) q M \right]\frac{\Lambda_x^s\psib^n}M\\
&\equiv I_1+I_2+I_3.
\end{align*}

\medskip

We bound first $I_1$. Let $[\Lambda_x^s,\ub^n]$ be the standard commutator defined by $[\Lambda_x^s,\ub^n]f=\Lambda_x^s(\ub^nf)-\ub^n\Lambda_x^sf$. Using that $\ub^n$ is divergence free, we can write
\begin{multline*}
I_1= -\iint_{\R^2\times D}\ub^n \cdot \nabla_x \Lambda_x^s\psib^n\frac{\Lambda_x^s\psib^n}M -\iint_{\R^2\times D}\dive_x[\Lambda_x^s,  \ub^n]\psib^n\frac{\Lambda_x^s\psib^n}M
=-\iint_{\R^2\times D}\dive_x[\Lambda_x^s,  \ub^n]\psib^n\frac{\Lambda_x^s\psib^n}M\\
\leq \normlbarl{\Lambda_x^s\psib^n}\normlbarl{\dive_x[\Lambda_x^s,  \ub^n]\psib^n}
\leq \normlbarl{\Lambda_x^s\psib^n} \bigl\|\|[\Lambda_x^s,  \ub^n]\psib^n\|_{H^1(\R^2)}\bigr\|_{L^2_M}\\
\leq C \normlbarl{\Lambda_x^s\psib^n} \bigl\|\|\ub^n\|_{H^{s+1}(\R^2)}\|\psib^n\|_{H^s(\R^2)}\bigr\|_{L^2_M}
=C\|\ub^n\|_{H^{s+1}(\R^2)}\normhbarl {\psib^n}s^2\\
\leq C\|\A^{\frac{1+s}2}u^n\|_{L^2(\Omega)}\normhbarl{\psib^n}s^2
\end{multline*}
where we used the embedding $H^s\subset L^\infty$, the classical commutator estimates, see \cite[Section 3.6]{Tay91}, and relation \eqref{boundubar}. 

\medskip

Next, we write
\begin{multline*}
I_2=\iint_{\R^2\times D}\Lambda_x^s\left(\sigma(\ub^n) q {\psib}^n \right)\cdot\nabla_q\Bigl(\frac{\Lambda_x^s\psib^n}M\Bigr)  
\leq C\normhbardoth{\psib^n}s \bigl\|\|\sigma(\ub^n) q {\psib}^n\|_{H^s(\R^2)}\bigr\|_{L^2_M}\\
\leq C\normhbardoth{\psib^n}s  \bigl\|\|\sigma(\ub^n)\|_{H^s(\R^2)} \|{\psib}^n\|_{H^s(\R^2)}\bigr\|_{L^2_M}
\leq C\normhbardoth{\psib^n}s\normhbarl{\psib^n}s\|\A^{\frac{1+s}2}u^n\|_{L^2(\Omega)}.
\end{multline*}

\medskip

We now make an integration by parts in $I_3$ and bound as follows
\begin{multline*}
I_3=a\iint_{\R^2\times D}\Lambda_x^s\left[ \sigma(\ub^n) q M \right]\cdot\nabla_q\bigl(\frac{\Lambda_x^s\psib^n}M\bigr)
\leq a\|  \Lambda_x^s\sigma(\ub^n)\|_{L^2(\R^2)}\|q\sqrt M\|_{L^2(D)}\normhbardoth{\psib^n}s\\
\leq \frac C{R^2}\|\A^{\frac{1+s}2}u^n\|_{L^2(\Omega)}\normhbardoth{\psib^n}s
\end{multline*}
where we used \eqref{valoarea} and \eqref{boundel}.

From relation \eqref{intpsibn} we have that $\int_D\Lambda_x^s\psib^n\dq\equiv0$. Relation \eqref{poincare} together withe the above estimates imply \eqref{psihsgen}. The corotational case \eqref{psihscor} also follows since in this case $I_3=0$.
\end{proof}

\paragraph{\large $H^s$ uniform bounds in the general case.}  We consider here the general case $\sigma(u)=\nabla u$. Let us first state the following remark.
\begin{remark}\label{grad2}
One can easily check that, given four strictly positive constants $A_1,A_2,A_3,A_4$ we have the following property: there exists some $\omega>0$ such that
\begin{equation*}
  A_1X^2+\omega A_2Y^2\geq A_3XY+\omega A_4XY\qquad \forall\ X,Y
\end{equation*}
if and only if $A_1A_2\geq A_3A_4$. Moreover, if the later is true then one can choose $\omega=\frac{2A_1A_2-A_3A_4}{A_4^2}$ which is of the same order as $A_1A_2/A_4^2$.
\end{remark}

We impose now that the condition above holds true with constants 
\begin{equation*}
A_1=\frac{\al_1}2, \quad A_2=\al_3, \quad A_3=C_1\frac{\al_2}{\delta^2}, \quad A_4=\frac{C_2}{R^2},  
\end{equation*}
that is we impose that
\begin{equation}\label{condcoeff}
\al_1\al_3\delta^2R^2\geq 2C_1C_2\al_2.  
\end{equation}
Let $\omega$ be as in the previous remark, of the same order as $\al_1\al_3R^4$. Assume moreover that
\begin{equation}
  \label{hypdatan}
 \| \A^{\frac s2}u^n_0 \|^2_{L^2(\Omega)}+\omega \normhbarl{\psib^n_0}s^2<\min\bigl(\frac{\al_1^2}{16C_1^2},\frac{\al_1\al_3}{4C_2^2}\bigr).  
\end{equation}
We multiply \eqref{psihsgen} by $\omega$ and add the result to \eqref{uhs}. After using Remark \ref{grad2} and recalling that $\psib^n\bigl|_{\Omega\times D}=\psi^n$ we obtain that
\begin{multline*}
  \ddt (\| \A^{\frac s2}u^n  \|^2_{L^2(\Omega)}+\omega \normhbarl{\psib^n}s^2)
+  \frac{\al_1}2 \| \A^{\frac{1+s}2}u^n \|^2_{L^2(\Omega)} +\omega\al_3\normhbardoth{\psib^n}s^2\\
\leq C_1\| \A^{\frac s2}u^n\|_{L^2(\Omega)}\| \A^{\frac {s+1}2}u^n\|_{L^2(\Omega)}^2  +\omega C_2 \|\A^{\frac{1+s}2}u^n\|_{L^2(\Omega)}  \normhbarl{\psib^n}s\normhbardoth{\psib^n}s.
\end{multline*}

Let $T_0$ be the first time such that
\begin{equation}\label{deftz}
 \| \A^{\frac s2}u^n (T_0) \|^2_{L^2(\Omega)}+\omega \normhbarl{\psib^n(T_0)}s^2=\min\bigl(\frac{\al_1^2}{16C_1^2},\frac{\al_1\al_3}{4C_2^2}\bigr).
\end{equation}
Then, for $t\in[0,T_0]$, we have that
\begin{equation*}
 \| \A^{\frac s2}u^n (t) \|\leq  \frac{\al_1}{4C_1}
\end{equation*}
so that
\begin{equation*}
C_1\| \A^{\frac s2}u^n(t)\|_{L^2(\Omega)}\| \A^{\frac {s+1}2}u^n(t)\|_{L^2(\Omega)}^2  \leq   \frac{\al_1}4\| \A^{\frac{1+s}2}u^n(t) \|^2_{L^2(\Omega)}.
\end{equation*}
We also have that
\begin{equation*}
 \normhbarl{\psib^n(t)}s\leq \frac1{2C_2}\sqrt{\frac{\al_1\al_3}{\omega}} 
\end{equation*}
so
\begin{align*}
\omega C_2 \|\A^{\frac{1+s}2}u^n\|_{L^2(\Omega)}  \normhbarl{\psib^n}s\normhbardoth{\psib^n}s
&\leq \frac{\sqrt{\al_1\al_3\omega}}{2}  \|\A^{\frac{1+s}2}u^n\|_{L^2(\Omega)} \normhbardoth{\psib^n}s\\
&\leq \frac{\al_1}8 \| \A^{\frac{1+s}2}u^n \|^2_{L^2(\Omega)} +\frac{\omega\al_3}2\normhbardoth{\psib^n}s^2
\end{align*}

We deduce from the above relations that,  for $t\in[0,T_0]$,
\begin{equation}\label{si1}
  \ddt (\| \A^{\frac s2}u^n  \|^2_{L^2(\Omega)}+\omega \normhbarl{\psib^n}s^2)+\frac{\al_1}8 \| \A^{\frac{1+s}2}u^n \|^2_{L^2(\Omega)} +\frac{\omega\al_3}2\normhbardoth{\psib^n}s^2\leq0 
\end{equation}
which implies that
\begin{equation*}
 \| \A^{\frac s2}u^n(T_0)  \|^2_{L^2(\Omega)}+\omega \normhbarl{\psib^n(T_0)}s^2 \leq \| \A^{\frac s2}u^n_0 \|^2_{L^2(\Omega)}+\omega \normhbarl{\psib^n_0}s^2<\min\bigl(\frac{\al_1^2}{16C_1^2},\frac{\al_1\al_3}{4C_2^2}\bigr).
\end{equation*}
This contradicts \eqref{deftz}. Therefore the time $T_0$ cannot exist, so
\begin{equation*}
  \| \A^{\frac s2}u^n(t)  \|^2_{L^2(\Omega)}+\omega \normhbarl{\psib^n(t)}s^2 <\min\bigl(\frac{\al_1^2}{16C_1^2},\frac{\al_1\al_3}{4C_2^2}\bigr)\qquad\forall t\geq0
\end{equation*}
and relation \eqref{si1} must hold true for all $t\geq0$.

We state the result proved in this paragraph in the following proposition.
\begin{proposition}\label{propgen}
Suppose that $\sigma(u)=\nabla u$ and that the material coefficients verify relation \eqref{condcoeff}.  Moreover assume that
\begin{equation}
  \label{condata}
 \| u_0 \|^2_{H^s(\Omega)}+C^2_0\omega \normhl{\psi_0}s^2<\min\bigl(\frac{\al_1^2}{16C_1^2},\frac{\al_1\al_3}{4C_2^2}\bigr).  
\end{equation}
Then the sequence $u^n$ is uniformly bounded in the space $L^\infty(\R_+;H^s(\Omega))\cap L^2(\R_+;H^{s+1}(\Omega))$ and the sequence $\psi^n$    is uniformly bounded in $L^\infty(\R_+;\hl s )\cap L^2(\R_+;\hdoth s)$.
\end{proposition}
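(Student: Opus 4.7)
The plan is to combine the two $H^s$ energy inequalities already established --- namely \eqref{uhs} for $u^n$ and \eqref{psihsgen} for the extended density $\psib^n$ --- into a single weighted scalar Lyapunov functional, and then close the estimate by a continuity/bootstrap argument exploiting the smallness of the initial data.

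First I would fix a weight $\omega>0$ to be chosen and form
\begin{equation*}
E(t):=\|\A^{\frac s2}u^n(t)\|_{L^2(\Omega)}^2+\omega\,\normhbarl{\psib^n(t)}s^2.
\end{equation*}
Adding \eqref{uhs} to $\omega$ times \eqref{psihsgen} produces a differential inequality whose dissipation is $\frac{\al_1}{2}\|\A^{\frac{1+s}2}u^n\|_{L^2}^2+\omega\al_3\normhbardoth{\psib^n}s^2$ and whose crucial forcing is the bilinear cross term $\|\A^{\frac{1+s}2}u^n\|_{L^2}\normhbardoth{\psib^n}s$, appearing with coefficient $C_1\al_2/\delta^2$ from the $u^n$-equation and with coefficient $\omega C_2/R^2$ from the $\psib^n$-equation. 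The weight $\omega$ must be chosen so that this cross term is entirely absorbed into the dissipation. By the algebraic observation of Remark \ref{grad2}, with the identification $A_1=\al_1/2$, $A_2=\al_3$, $A_3=C_1\al_2/\delta^2$, $A_4=C_2/R^2$, such an absorption is possible precisely when $A_1A_2\geq A_3A_4$, which is exactly hypothesis \eqref{condcoeff}; an admissible weight is then $\omega\sim\al_1\al_3R^4$. In my view this algebraic step is the main obstacle and is exactly what forces the coefficient condition \eqref{condcoeff}.

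Once the coupling is absorbed, the remaining super-linear terms are the cubic self-interaction $C_1\|\A^{\frac s2}u^n\|_{L^2}\|\A^{\frac{1+s}2}u^n\|_{L^2}^2$ from \eqref{uhs} and the trilinear term $\omega C_2\|\A^{\frac{1+s}2}u^n\|_{L^2}\normhbarl{\psib^n}s\normhbardoth{\psib^n}s$ from \eqref{psihsgen}; both will be closed by smallness. I would define the threshold $\mu:=\min\bigl(\al_1^2/(16C_1^2),\,\al_1\al_3/(4C_2^2)\bigr)$ and let $T_0$ be the first time (if any) at which $E(T_0)=\mu$. On $[0,T_0]$ one then has the pointwise bounds $\|\A^{\frac s2}u^n\|_{L^2}\leq\al_1/(4C_1)$ and $\normhbarl{\psib^n}s\leq \frac{1}{2C_2}\sqrt{\al_1\al_3/\omega}$, which are precisely what is needed for Young's inequality to turn both super-linear terms into at most half of the available dissipation, yielding
\begin{equation*}
\ddt E+\tfrac{\al_1}{8}\|\A^{\frac{1+s}2}u^n\|_{L^2}^2+\tfrac{\omega\al_3}{2}\normhbardoth{\psib^n}s^2\leq 0\qquad\text{on }[0,T_0].
\end{equation*}

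Finally, I would use the smallness hypothesis \eqref{condata} on the initial data together with the extension bound \eqref{cond3} and the projection bound \eqref{boundpn} to ensure that $E(0)<\mu$. The displayed inequality then gives $E(T_0)\leq E(0)<\mu$, contradicting the defining equality $E(T_0)=\mu$; hence $T_0$ does not exist, $E(t)<\mu$ holds for all $t\geq 0$, and time-integration of the dissipative inequality delivers the claimed $L^\infty(\R_+;H^s(\Omega))\cap L^2(\R_+;H^{s+1}(\Omega))$ bound on $u^n$ together with the analogous $L^\infty(\R_+;\hl s)\cap L^2(\R_+;\hdoth s)$ bound on $\psi^n=\psib^n|_{\Omega\times D}$, all uniform in $n$.
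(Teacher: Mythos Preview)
Your proposal is correct and follows essentially the same route as the paper: form the weighted Lyapunov functional $E(t)$, use Remark~\ref{grad2} with exactly the constants $A_1=\al_1/2$, $A_2=\al_3$, $A_3=C_1\al_2/\delta^2$, $A_4=C_2/R^2$ to absorb the bilinear cross term (this is where \eqref{condcoeff} enters), then run the continuity argument with threshold $\mu$ and obtain the dissipative inequality \eqref{si1}. The only small imprecision is in the verification of $E(0)<\mu$: \eqref{boundpn} is stated for $\sigma\in[0,\tfrac12)$ and does not cover $\psi^n_0$, so you should instead invoke the convergences \eqref{cond1}, \eqref{cond2} together with \eqref{cond3} to pass from \eqref{condata} to \eqref{hypdatan} for $n$ sufficiently large, exactly as the paper does.
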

\begin{proof}
It suffices to show that \eqref{condata} implies \eqref{hypdatan} for $n$ sufficiently large. This follows at once from \eqref{cond1}, \eqref{cond2} and \eqref{cond3}.  
\end{proof}

\paragraph{\large $H^s$ uniform bounds in the corotational case.}  We consider now the corotational case $\sigma(u)=\nabla u-(\nabla u)^t$. Denoting
\begin{equation*}
f_1= \| \A^{\frac s2}u^n\|^2_{L^2(\Omega)},\quad
f_2= \| \A^{\frac {s+1}2}u^n\|^2_{L^2(\Omega)},\quad
g_1= \normhbarl{\psib^n}s^2\quad\text{and}\quad
g_2= \normhbardoth{\psib^n}s^2
\end{equation*}
and recalling that $\psib^n\bigl|_{\Omega\times D}=\psi^n$, we have from relations \eqref{uhs} and \eqref{psihscor} that
\begin{gather*}
f_1'+\al_1 f_2\leq C_1hf_1+\frac{C_1\al_2}{\delta^2}\sqrt{f_2g_2}\\%\label{eqf1}\\
\intertext{and that}
g_1'+2\al_3 g_2\leq C_2\sqrt{g_1f_2g_2}.%\label{eqg1}
\end{gather*}

Using the following two bounds
\begin{equation*}
  C_2\sqrt{g_1f_2g_2}\leq \al_3 g_2+\frac{C_2^2}{4\al_3}g_1f_2
\quad\text{ and }\quad
  \frac{C_1\al_2}{\delta^2}\sqrt{f_2g_2}\leq \frac{\al_1}2 f_2+ \frac{C^2_1\al^2_2}{2\al_1\delta^4}g_2
\end{equation*}
we infer that
\begin{gather}
f_1'+\frac{\al_1}2 f_2\leq C_1hf_1+\frac{C_1^2\al_2^2}{2\al_1\delta^4}g_2\label{eqf1}\\
\intertext{and that}
g_1'+\al_3 g_2\leq \frac{C^2_2}{4\al_3}g_1f_2.\label{eqg1}
\end{gather}

Let $\ep$ be a small enough constant to be chosen later but such that
\begin{equation}\label{ep1}
g_1(0)<\ep.  
\end{equation}
Let $T_0$ be the first time such that
\begin{equation}\label{g1t}
g_1(T_0)=\ep.  
\end{equation}
We have that $g_1(t)<\ep$ for all $t\in[0,T_0)$. Using this in \eqref{eqg1} and integrating in time implies that for all $t\in[0,T_0]$
\begin{equation}\label{boundg1}
g_1(t)+\al_3\int_0^tg_2\leq  g_1(0)+\frac{C_2^2\ep}{4\al_3}\int_0^tf_2. 
\end{equation}

Multiplying \eqref{eqf1} by $e^{-C_1\int_0^th}$ and integrating in time results in
\begin{equation*}
f_1(t)+\frac{\al_1}2\int_0^t f_2(s)e^{C_1\int_s^th}ds\leq f_1(0)  e^{C_1\int_0^th}+\frac{C^2_1\al^2_2}{2\al_1\delta^4}\int_0^t g_2(s)e^{C_1\int_s^th}ds
\end{equation*}
We use now the estimate \eqref{boundg1} above. We infer
\begin{equation*}
 f_1(t)+\frac{\al_1}2\int_0^t f_2 \leq \bigl[f_1(0)+\frac{C^2_1\al_2^2}{2\al_1\al_3\delta^4}g_1(0)\bigr]  e^{C_1\int_0^\infty h}
+\frac{C^2_1C^2_2\al_2^2\ep}{8\al_1\al_3^2\delta^4}e^{C_1\int_0^\infty h}\int_0^t f_2.
\end{equation*}
We now add the following assumption on $\ep$:
\begin{equation}\label{ep2}
\frac{C^2_1C^2_2\al_2^2\ep}{8\al_1\al_3^2\delta^4}e^{C_1\int_0^\infty h}\leq\frac{\al_1}4
\end{equation}
Assuming that this is true, we further obtain that for all $t\in[0,T_0]$
\begin{equation*}%\label{boundf1}
 f_1(t)+\frac{\al_1}4\int_0^t f_2 \leq \bigl[f_1(0)+\frac{C^2_1\al_2^2}{2\al_1\al_3\delta^4}g_1(0)\bigr]  e^{C_1\int_0^\infty h}.
\end{equation*}
Going back to \eqref{eqg1}, ignoring the second term on the left-hand side and using the Gronwall lemma implies now that 
\begin{equation}
  \label{boundg1final}
 g_1(t)\leq  g_1(0)\exp\Bigl\{\frac{C_2^2}{\al_1\al_3}\bigl[f_1(0)+\frac{C^2_1\al_2^2}{2\al_1\al_3\delta^4}g_1(0)\bigr]  e^{C_1\int_0^\infty h}\Bigr\}
\end{equation}
for all $t\in[0,T_0]$. If we further assume that
\begin{equation}\label{ep3}
 g_1(0)\exp\Bigl\{\frac{C_2^2}{\al_1\al_3}\bigl[f_1(0)+\frac{C^2_1\al_2^2}{2\al_1\al_3\delta^4}g_1(0)\bigr]  e^{C_1\int_0^\infty h}\Bigr\}<\ep 
\end{equation}
then we observe that setting $t=T_0$ in \eqref{boundg1final} contradicts \eqref{g1t}. We conclude that under the hypothesis \eqref{ep1}, \eqref{ep2} and \eqref{ep3} the time $T_0$ cannot exist, so all the previous relations hold true for all times $t\geq0$. Clearly \eqref{ep1} is implied by \eqref{ep3}. Recalling \eqref{inegh} we therefore observe that there exists some $\ep$ verifying \eqref{ep1}, \eqref{ep2} and \eqref{ep3} if we have that
\begin{multline}\label{hypdatan1}
g_1(0)\exp\Bigl\{\frac{C_2^2}{\al_1\al_3}\bigl[f_1(0)+\frac{C^2_1\al_2^2}{2\al_1\al_3\delta^4}g_1(0)\bigr]  
e^{C_1\al_1^{-\frac4s}\bigl( \|u_0 \|_{L^2(\Omega)}^2 + \al_4 \normll{\psi_0}^2\bigr)^{\frac2s}}\Bigl\} \\
<\frac{2\al_1^2\al_3^2\delta^4}{C_1^2C_2^2\al_2^2} \exp\Bigl[-C_1\al_1^{-\frac4s}\bigl( \|u_0 \|_{L^2(\Omega)}^2 + \al_4 \normll{\psi_0}^2\bigr)^{\frac2s}\Bigl]. 
\end{multline}

We state the result proved in this paragraph in the following proposition.
\begin{proposition}\label{propcor}
Suppose that $\sigma(u)=(\nabla u-\nabla u)^t$. There exists a constant $C=C(\Omega,s)$ such that if 
\begin{multline}\label{condata1}
\normhl{\psi_0}{s}\exp\Bigl\{\frac{C}{\al_1\al_3}\bigl[\|u_0\|^2_{H^s(\Omega)}+\frac{C\al_2^2}{\al_1\al_3\delta^4}\normhl{\psi_0}{s}^2\bigr]  
e^{C\al_1^{-\frac4s}\bigl( \|u_0 \|_{L^2(\Omega)}^2 + \al_4 \normll{\psi_0}^2\bigr)^{\frac2s}}\Bigl\} \\
<\frac{\al_1^2\al_3^2\delta^4}{C\al_2^2} \exp\Bigl[-C\al_1^{-\frac4s}\bigl( \|u_0 \|_{L^2(\Omega)}^2 + \al_4 \normll{\psi_0}^2\bigr)^{\frac2s}\Bigl]
\end{multline}
then the sequence $u^n$ is uniformly bounded in the space $L^\infty(\R_+;H^s(\Omega))\cap L^2(\R_+;H^{s+1}(\Omega))$ and the sequence $\psi^n$    is uniformly bounded in $L^\infty(\R_+;\hl s )\cap L^2(\R_+;\hdoth s)$.
\end{proposition}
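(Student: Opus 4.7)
The plan is to mirror the one-line proof given for Proposition \ref{propgen}. All the analytical work has already been done in the derivation of the coupled differential inequalities \eqref{eqf1}--\eqref{eqg1} and the bootstrap argument that follows them: provided the $n$-dependent smallness condition \eqref{hypdatan1} is satisfied at time $t=0$, the preceding paragraph shows that the exit time $T_0$ cannot exist, so $g_1$ remains below $\varepsilon$ for all $t \ge 0$, $f_1+\frac{\al_1}{4}\int_0^t f_2$ stays bounded by the right-hand side of \eqref{boundg1final}, and $\int_0^\infty g_2 < \infty$. Translating via the identifications $f_1 = \|\A^{s/2} u^n\|_{L^2(\Omega)}^2$, $f_2 = \|\A^{(s+1)/2}u^n\|_{L^2(\Omega)}^2$, $g_1 = \normhbarl{\psib^n}{s}^2$, $g_2 = \normhbardoth{\psib^n}{s}^2$, and using $\psib^n\bigl|_{\Omega\times D} = \psi^n$ together with \eqref{poincare} for the $\ll$ piece, these are precisely the claimed uniform bounds on $u^n$ and $\psi^n$. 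So the only thing to prove is that hypothesis \eqref{condata1} on the \emph{limit} initial data forces \eqref{hypdatan1} on the approximate initial data, for every $n$ large enough.

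This I would handle by a routine continuity argument. By \eqref{cond2}, $\|u^n_0\|_{H^s(\Omega)}\to\|u_0\|_{H^s(\Omega)}$ and $\|u^n_0\|_{L^2(\Omega)}\le\|u_0\|_{L^2(\Omega)}$. By \eqref{cond1}, $\psi^n_0\to\psi_0$ in $\hl{s}$, and the continuous inclusion $\hl{s}\hookrightarrow\ll$ also gives $\normll{\psi^n_0}\to\normll{\psi_0}$. Finally, the extension bound \eqref{cond3} converts this into $\normhbarl{\psib^n_0}{s}\le C_0\normhl{\psi^n_0}{s}$, so up to the fixed constant $C_0$ the norm $g_1(0)$ is controlled by $\normhl{\psi_0}{s}^2$. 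Both sides of \eqref{hypdatan1} are continuous functions of the four norms $\|u_0\|_{H^s(\Omega)}$, $\|u_0\|_{L^2(\Omega)}$, $\normhl{\psi_0}{s}$, $\normll{\psi_0}$, with the left-hand side monotone nondecreasing in each of them and the right-hand side depending only on the two $L^2$ norms through a decreasing exponential. Choosing the generic constant $C=C(\Omega,s)$ in \eqref{condata1} large enough to absorb $C_0^2$ and a small approximation margin, the strict inequality \eqref{condata1} then implies \eqref{hypdatan1} for all $n$ from some index onward.

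I do not expect a genuine obstacle: no new estimate is needed. The only thing to watch is the algebraic form of \eqref{hypdatan1}, specifically that the outer exponentials on both sides involve only $\|u_0\|_{L^2(\Omega)}^2+\al_4\normll{\psi_0}^2$ through the bound \eqref{inegh} on $\int_0^\infty h$, while the inner square bracket is linear in $f_1(0)$ and $g_1(0)$. Because the $L^2$ norms converge exactly and the $H^s$/$\hbarl{s}$ norms are dominated by $\|u_0\|_{H^s(\Omega)}$ and $C_0\normhl{\psi_0}{s}$ in the limit, one absorbs all conversion factors into $C$ and is left with \eqref{condata1} as the sufficient condition. This completes the passage from the bootstrap estimates to the statement of Proposition \ref{propcor}.
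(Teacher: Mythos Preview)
Your proposal is correct and follows exactly the paper's approach: the paper's proof of Proposition \ref{propcor} consists of the single observation that it suffices to show \eqref{condata1} implies \eqref{hypdatan1} for $n$ large, which follows from \eqref{cond1}, \eqref{cond2} and \eqref{cond3}. You have simply spelled out in more detail the continuity argument that the paper leaves implicit.
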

\begin{proof}
It suffices to show that \eqref{condata1} implies \eqref{hypdatan1} for $n$ sufficiently large. This  follows at once from \eqref{cond1}, \eqref{cond2} and \eqref{cond3}.  
\end{proof}
\begin{remark}\label{remcor}
It is not difficult to see that there exists some constant $K=K(\Omega,s,\al_1,\al_2,\al_3,\al_4,\delta)$ such that condition  \eqref{condata1} is implied by the following condition:
\begin{equation*}
\normhl{\psi_0}{s}\leq \exp\Bigl[-K(1+\|u_0\|_{H^s(\Omega)}) e^{K\|u_0 \|_{L^2(\Omega)}^{\frac4s}}\Bigr]. 
\end{equation*}
\end{remark}

\paragraph{\large End of the proof.}
Clearly the hypothesis of Proposition \ref{propgen} is implied by that of Theorem \ref{thgen}, and the hypothesis of Proposition \ref{propcor} is implied by that of Theorem \ref{thcor} (see also Remark \ref{remcor}). Therefore, under the hypothesis of Theorem \ref{thgen} in the general case and under the hypothesis of Theorem \ref{thcor} in the corotational case, we have that the sequence $u^n$ is uniformly bounded in the space $L^\infty(\R_+;H^s(\Omega))\cap L^2(\R_+;H^{s+1}(\Omega))$ and the sequence $\f^n$    is uniformly bounded in $L^\infty(\R_+;\hl s )\cap L^2(\R_+;\hdoth s)$. Using the equations of $u^n$ and $\f^n$ \eqref{fas5} and \eqref{fas6} this immediately implies some time-derivative estimates for   $u^n$ and $\f^n$. A standard compactness argument allows to pass to the limit and find a solution $(u,\f)$ of \eqref{fequ}--\eqref{feqpsi} such that $u\in L^\infty(\R_+;H^s(\Omega))\cap L^2(\R_+;H^{s+1}(\Omega))$ and $\f\in L^\infty(\R_+;\hl s )\cap L^2(\R_+;\hdoth s)$. We skip the details since this is very classical and straightforward. Next, we clearly have that $u$ is divergence free and tangent to the boundary. Passing to the limit in \eqref{intfn} shows that that relation holds true with $f^n$ replaced by $f$. Finally, the equation \eqref{fas6} preserves the sign if the initial data is single-signed so $f^n\geq0$ which implies in turn that $f\geq0$. This completes the proof of the existence of the solution.

The uniqueness of solutions is obvious and follows by making energy estimates on the difference between two solutions. If $(u_1,\f_1)$ and $(u_2,\f_2)$ are two solutions with the same data, then  we multiply the difference of the equations of $u_1$ and $u_2$ by
$u_1-u_2$ and the difference of the equations of $\f_1$ and $\f_2$ by $\frac{\f_1-\f_2}M$ and add the two resulting relations. Uniqueness follows easily from the Gronwall inequality using that $u\in L^2(\R_+;Lip)$ and the linearity in the $q$ variable of the equation of $\f$. This is very standard so we skip the details. Remark that even though we show that \eqref{feqpsi} holds true in the sense of distributions, \textit{i.e.} \eqref{feqpsi} can be multiplied by test functions which are compactly supported in the $q$ variable, it can in fact be multiplied by functions which are $H^1_M$ in the $q$ variable. This follows from a density argument using that $C^\infty_0(D)$ is dense in $H^1_M(D)$ as was proved in \cite{Mas08a}. This completes the proofs of Theorems \ref{thgen} and \ref{thcor}.

\section{Final remarks}\label{finalremarks}

First, we would like to explain here why the condition on the coefficients \eqref{condcoefftheo} is necessary in the general case on bounded domains. We will observe that a certain cancellation that occurs in the case without boundary does not work anymore in the presence of boundaries. When making $H^m$ estimates on $u$ and $\hl m$ estimates on $\psi=f-aM$ we apply $\partial^\al$ to the equation of $u$ given in \eqref{fequ} and multiply by $\partial^\al u$, we apply $\partial^\al$ to the equation of $\psi$ given in \eqref{as6} (where we dropped the superscript $n$)  and multiply by $\frac{\partial^\al \psi}{2\delta a M}$ and we add the two resulting relations. We get the following right-hand side: 
\begin{equation*}
\int_\Omega \Bigl[\nabla_x \cdot \int_D \dfrac{q\otimes q}{1-|q|^2}\partial_x^\al \f\dq \Bigr]\cdot\partial_x^\al u- \frac1{2\delta}\iint_{\Omega\times D}\nabla_q\cdot\left[\nabla \partial_x^\al u \, q M \right]\frac{\partial_x^\al\f}M.
\end{equation*}
In the case of a domain without boundary, making an integration by parts implies, after some calculations, that the term above vanishes. But in the case of a domain with boundary, the boundary terms do not vanish. Moreover, due to the presence of the pressure in the equation of $u$, the first term above should have the Leray projector $\PP$ in front of $\nabla_x\cdot$ making the validity of this identity even more unlikely in presence of boundaries. Since the term above does not vanish anymore, we need to be able to say that it is small (negligible compared to others) and this in turn requires the smallness condition \eqref{condcoefftheo}.

\bigskip

Second, we would like to explain why the restriction $1<s<\frac32$ is necessary. In order to be able to control the equation on $\f$ we basically need Lipschitz regularity for $u$. If we assume that the initial velocity belongs to $H^s(\Omega)$, then the standard regularity for $u$ obtained through energy estimates is $L^2_tH^{s+1}_x$. To get Lipschitz regularity in $x$ we therefore need to assume that $s>1$. On the other hand, when making the same $H^s$ estimates on $u^n$ we are led to applying the  projector $\PP_n$ to the equation of $u^n$ \eqref{equn} and to estimate the right-hand side in $H^{s-1}$. This requires the  projection $\PP_n$ to be bounded in $H^{s-1}$ which implies $s-1<\frac12$ so $s<\frac32$. This explains why the condition $1<s<\frac32$ is required. We would also like to point out that in dimension three, the first requirement that $u$ to be Lipschitz implies $s>\frac32$ while the second requirement does not change leading to contradictory assumptions. This means that our approach does not work in dimension three.

\paragraph{\large Acknowledgements.}

One of us, L.I.P., is grateful to  Professor Robert Byron
Bird, University of Wisconsin, Madison, for kind support in the past.

% \bibliographystyle{halpha}
% \bibliography{../OTHER/MACROS/referinte}

\def\cprime{$'$} \def\cprime{$'$} \def\cydot{{\l}eavevmode\raise.4ex\hbox{.}}
  \def\cprime{$'$} \def\cprime{$'$}
  \def\polhk\#1{\setbox0=\hbox{\#1}{{\o}oalign{\hidewidth
  {\l}ower1.5ex\hbox{`}\hidewidth\crcr\unhbox0}}}

\adrese

\end{document}